\newcommand{\bbR}{\mathbb{R}}
\newcommand{\bbC}{\mathbb{C}}
\newcommand{\bbZ}{\mathbb{Z}}
\newcommand{\bbT}{\mathbb{T}}
\newcommand\tbbint{{-\mkern -16mu\int}}
\newcommand\dbbint{{-\mkern -19mu\int}}
\newcommand\bbint{
	{\mathchoice{\dbbint}{\tbbint}{\tbbint}{\tbbint}}
}
\begin{document}

\title{Convergence analysis of PM-BDF2 method for quasiperiodic parabolic equations

% \thanks{Kai Jiang is partially supported by the National Key R\&D Program of China (2023YFA1008802), the NSFC grant (12171412), the Science and Technology Innovation Program of Hunan Province (2024RC1052), and the Innovative Research Group Project of Natural Science Foundation of Hunan Province of China (2024JJ1008).
% Lei Zhang is partially supported by the NSFC grant (12271360), the Shanghai Municipal Science and Technology Project (22JC1401600), and the Fundamental Research Funds for the Central Universities.
% We are also grateful to the High Performance Computing Platform of Xiangtan University for partial support of this work.
% }
}
%\subtitle{}

%\titlerunning{Short form of title}        % if too long for running head

\author{Kai Jiang 
\and Meng Li 
\and Juan Zhang
\and Lei Zhang%etc.
}

%\authorrunning{Short form of author list} % if too long for running head

\institute{Hunan Key Laboratory for Computation and Simulation in Science and Engineering, Key Laboratory of Intelligent Computing
and Information Processing of Ministry of Education, Department of Mathematics and Computational Science, Xiangtan University, Xiangtan, Hunan, 411105, P. R. China. \at 
\email{kaijiang@xtu.edu.cn (Kai Jiang),
    limeng@smail.xtu.edu.cn (Meng 
    Li),
    zhangjuan@xtu.edu.cn (Juan Zhang).}\at
School of Mathematical Sciences, Institute of Natural Sciences, MOE-LSC, Shanghai Jiao Tong University, Shanghai, 200240, China.\at
\email{lzhang2012@sjtu.edu.cn (Lei Zhang).
              }         
}

\date{Received: date / Accepted: date}
% The correct dates will be entered by the editor

\maketitle

\begin{abstract}
Numerically solving parabolic equations with quasiperiodic coefficients is a significant challenge due to the potential formation of space-filling quasiperiodic structures that lack translational symmetry or decay. In this paper, we introduce a highly accurate numerical method for solving time-dependent quasiperiodic parabolic equations. We discretize the spatial variables using the projection method (PM) and the time variable with the second-order backward differentiation formula (BDF2). We provide a  complexity analysis for the resulting PM-BDF2 method. Furthermore, we conduct a detailed convergence analysis, demonstrating that the proposed method exhibits spectral accuracy in space and second-order accuracy in time. Numerical results in both one and two dimensions validate these convergence results, highlighting the PM-BDF2 method as a highly efficient algorithm for addressing quasiperiodic parabolic equations.
\keywords{Quasiperiodic parabolic equation \and 
Projection method\and
Second-order backward differentiation formula\and
Convergence analysis.}
% \PACS{PACS code1 \and PACS code2 \and more}
\subclass{  65D05 \and 65D15 \and 35B15 \and 68Q25}
\end{abstract}

\section{Introduction}
In this paper, we consider the following  quasiperiodic parabolic equation (QPE)
\begin{equation}\label{eqn:parabolic}
\left\{
		\begin{aligned}
			&\frac{\partial u(\bm{x}, t)}{\partial t}+\mathcal{L}u(\bm{x}, t)=f(\bm{x}, t), \quad(\bm{x}, t)\in R_T,\\
			&u(\bm{x}, 0)=u^0(\bm{x}),
		\end{aligned}    			
		\right.
\end{equation}
 where the set $R_T:=\bbR^d\times (0, T)$ for the fixed time $T>0$. The second order quasiperiodic elliptic operator $\mathcal{L}$ is defined as  $\mathcal{L}u(\bm{x}, t):=-\mathrm{div}(\alpha(\bm{x})\nabla u(\bm{x}, t))$, with $\alpha(\bm{x})$ and $f(\bm{x}, t)$ being quasiperiodic coefficient and source term in the spatial direction, respectively.

Parabolic equations typically refer to partial differential equations (PDEs) that describe heat conduction and diffusion phenomena. These equations are fundamental in many areas of science and engineering.  
% They have been extensively studied in various contexts, including numerical analysis and adaptive methods for solving them, as discussed in works such as those by Douglas \cite{douglas1956numerical} and Singh \cite{singh2017adaptive}. 
{They have been extensively studied in various contexts, including numerical analysis and adaptive methods for solving them. In particular, adaptive finite element methods, which adjust mesh sizes in both space and time based on the problem's characteristics, have proven to be highly effective. These methods, as discussed in works by Douglas \cite{douglas1956numerical} and Eriksson \cite{eriksson1991adaptive}, enable more efficient and accurate solutions, particularly when dealing with complex or evolving problems.}
The study of parabolic equations with periodic coefficients has reached a relatively mature stage, with significant contributions from researchers like Amann \cite{amann1978periodic}, Mercier \cite{mercier1989introduction}, and Zhikov \cite{zhikov2006estimates}. These studies have provided a solid foundation for understanding the behavior of solutions in periodic settings.
In recent decades, there has been increasing interest in quasiperiodic parabolic equations (QPEs) due to their ability to model complex phenomena such as turbulent flows, as highlighted by researchers like Maestrello \cite{maestrello1979quasi} and Takeda \cite{takeda1999quasi}. This growing interest is driven by the fascinating and intricate behaviors exhibited by systems under quasiperiodic influences.
Numerous mathematical studies have been dedicated to exploring QPEs. Zaidman \cite{zaidman1961soluzioni} was one of the pioneers in this field, establishing the existence of quasiperiodic solutions for linear parabolic equations. Following his work, several researchers, including Nakao \cite{nakao1976bounded,nakao1978bounded}, Ward \cite{ward1988bounded}, and Yoshizawa \cite{yoshizawa2012stability}, have extended these results to semilinear and nonlinear QPEs, demonstrating the robustness of quasiperiodic solutions in more complex settings.
A notable recent contribution by Riccardo Montalto \cite{montalto2021navier} further advanced the understanding of QPEs. Montalto's work demonstrated the existence of quasiperiodic solutions for a general nonlinear parabolic equation defined on a torus \(\mathbb{T}^d\), with \(d \geq 2\), under the influence of a time-quasiperiodic external force. This result represents a significant step forward in the study of nonlinear QPEs, providing new insights into their behavior and potential applications.
 
%  Expanding on these foundational studies, additional researchers have utilized the normal form technique and incorporated the Kolmogorov-Arnold-Moser (KAM) theory to delve deeper into the QPE. This exploration has unveiled a range of compelling findings \cite{franzoi2024kam}, which could hold relevance for understanding Naiver-Stokes equations.

However, numerically solving  QPEsemains a significant challenge due to their solutions
being quasiperiodic, exhibiting globally ordered structures without translational symmetry or decay. A common numerical method to solve QPEs 
  is the finite element method, which restricts the global PDEs \eqref{eqn:parabolic} to a finite domain. Although there exist some adaptive ways to improve convergence  \cite{braack2011duality,bastidas2021numerical},  it remains difficult to overcome the influence of irrational numbers, as discussed in  \cite{jiang2023approximation}. Therefore, there is still a lack of highly precise and efficient numerical
algorithms for solving QPE.  
Recently, the projection method (PM) \cite{jiang2014numerical,jiang2018numerical} has emerged as a highly accurate and efficient approach for approximating quasiperiodic systems.   Extensive studies have demonstrated that the PM can achieve high accuracy in computing various quasiperiodic
 systems \cite{cao2021computing,jiang2015stability,jiang2022tilt,xueyang2021numerical}. The comprehensive function approximation analysis of PM has been presented in \cite{jiang2024numerical}. Furthermore, PM has been effectively applied to solving quasiperiodic elliptic equations \cite{jiang2024projection}, with comprehensive numerical analysis conducted. It has also successfully addressed quasiperiodic homogenization in multiscale quasiperiodic elliptic equations.    These advancements provide valuable insights into our research and applications.
The purpose of this paper is to propose efficient methods for accurately solving spatially quasiperiodic solutions of  quasiperiodic parabolic equations, and to establish the corresponding convergence analysis. Concretely, we employ the PM to discretize the  quasiperiodic solution of QPE \eqref{eqn:parabolic} and derive corresponding  discrete scheme. Furthermore, we present a rigorous error analysis to demonstrate the 
spectral
accuracy of the PM. 

% One- and two-dimensional numerical examples are provided to  validate the effectiveness of the proposed algorithm. In conclusion, we demonstrate that PM is an efficient and high-precision algorithm for solving QPE \eqref{eqn:parabolic}.

\textbf{Organization:} The paper is organized as follows. Section \ref{sec:quasiperiodic_function} introduces the quasiperiodic function spaces. Section \ref{sec:quasiperiodic_parabolic} discusses the well-posedness of QPE \eqref{eqn:parabolic}. Section \ref{sec:numerical_methods} introduces the combination of PM with the second-order backward differentiation formula (BDF2) to discretize \eqref{eqn:parabolic}, leading to the PM-BDF2 method. We also provide its numerical implementation and an analysis of its computational complexity.  Section \ref{sec:convergence_analysis} offers the convergence analysis of the proposed method. Section \ref{sec:numerical_experiments} presents numerical results that further validate this analysis. Finally, the conclusions of the paper are summarized in Section \ref{sec:conclusion}.
 
 \section{Quasiperiodic function space}\label{sec:quasiperiodic_function}
%  In this section, we present a brief overview of \emph{quasiperiodic functions}  and the concept of \emph{quasiperiodic function spaces} in Section \ref{sec:quasiperiodic_function}. This preliminary knowledge is essential for establishing the well-posedness of the quasiperiodic parabolic equation introduced in Section \ref{sec:quasiperiodic_parabolic}.
% \subsection{Quasiperiodic function space}\label{sec:quasiperiodic_function}
In this section, we  present the definition and properties of  quasiperiodic function spaces.  For clarity, we will use the following notations: let $\Omega_L=[-L, L]^d\subset\bbR^d$ with volume $|\Omega_L|= (2L)^d$. We present the definition of quasiperiodic functions.
\begin{definition}
    A $d\times n$-order matrix $\bm{P}~(\textrm{with}~d\leq n)$ is the projection matrix, if it belongs to the set $\mathbb{P}^{d\times n }$ defined as $\mathbb{P}^{d\times n}:=\left\{\bm{P}=(\bm{p}_1,\cdots,\bm{p}_n)\in\mathbb{R}^{d\times n}:\bm{p}_1,\cdots,\bm{p}_n ~\mathrm{are}~\mathbb{Q}\text{-linearly independent}\right\}$.
\end{definition}

\begin{definition}
A $d$-dimensional function $u(\bm{x})$ is quasiperiodic, if there exists an $n$-dimensional periodic function $U$ and a projection matrix $\bm{P}\in\mathbb{P}^{d\times n}$, such that $u(\bm{x})=U(\bm{P}^T\bm{x})$ for all $\bm{x}\in\mathbb{R}^d$. $U$ is called the \emph{parent function} of $u(\bm{x})$. 
\end{definition}
 In particular, when $n = d$ and $\bm{P}$ is nonsingular, $u(\bm{x})$ is periodic. The continuous Fourier-Bohr transform of $u(\bm{x})$ is
\begin{equation}\label{eqn:continuous_ft}
    \hat{u}_{\bm{\lambda}}=\lim_{L\to \infty}\frac{1}{|\Omega_{L}|}\int_{\Omega_{L}} u(\bm{x})e^{-\imath\bm{\lambda}\cdot\bm{x}}d\bm{x}:=\bbint u(\bm{x})e^{-\imath\bm{\lambda}\cdot\bm{x}}d\bm{x},\quad \bm{\lambda}\in\bbR^d.
\end{equation}
The Fourier series associated to $u(\bm{x})$ is
\begin{align}
u(\bm{x})=\sum_{\bm{k}\in\mathbb{Z}^n}\hat{u}_{\bm{\lambda_k}} e^{\imath\bm{\lambda_k}\cdot\bm{x}},
\label{eqn:continuous_FS}
\end{align}
where $\bm{\lambda_k}\in\Lambda:=\left\{\bm{\lambda_k}=\bm{P}\bm{k}:\hat{u}_{\bm{\lambda_k}}\neq 0\right\}$ are Fourier frequencies and $\hat{u}_{\bm{\lambda_k}}$ are Fourier coefficients. The set $\Lambda$ is called  the \emph{spectral point set} of $u(\bm{x})$. Moreover, we have the Parseval's identity
\begin{equation}\label{eqn:parsevel}
    \sum_{\bm{k}\in\bbZ^n}\hat{u}_{\bm{\lambda_k}}=\bbint |u(\bm{x})|^2d\bm{x}.
\end{equation}
%\noindent$\bullet$ $\mbox{Tri}(\mathbb{T}^n)$ \textbf{set}: $\mathrm{Tri}(\mathbb{T}^n)=\left\{U(\bm{y})=\sum_{\bm{k}\in\mathbb{Z}^n}\hat{U}_{\bm{k}}e^{i\bm{ky}},\bm{y}\in\mathbb{T}^n:\sum_{\bm{k}\in\mathbb{Z}^n}|\hat{U}_{\bm{k}}|<\infty\right\}.$
%For a given projection matrix $\bm{P},$ we define 
%$$
%W_{\bm{P}}(\mathbb{R}^d)=\left\{f(\bm{x}\in\mathbb{C}(\mathbb{R}^d): f(\bm{x})= F(\bm{P}^T \bm{x}), F\in\mbox{Tri}(\mathbb{T}^n)\right\}.
%$$

Correspondingly, we introduce the periodic and quasiperiodic function spaces.
% For any multi-index $\mu=(\mu_1,\cdots, \mu_d)\in\bbN^d$ and $\bm{x}\in\bbR^d$, let $\partial_{\bm{x}}^\mu=\partial_{x_1}^{\mu_1}\cdots\partial_{x_d}^{\mu_d}$.
 
\noindent$\bullet$ $L^2(\mathbb{T}^n)$ \textbf{space}: 
$L^2(\mathbb{T}^n)=\left\{U(\bm{y}):\displaystyle\frac{1}{|\mathbb{T}^n|}\displaystyle\int_{\mathbb{T}^n}|U|^2d\bm{y}<\infty\right\}$ equipped with the inner product 
$$
(U_1, U_2)_{L^2(\mathbb{T}^n)}=\frac{1}{|\mathbb{T}^n|}\int_{\mathbb{T}^n}U_1\overline{U}_2d\bm{y}.
$$

\noindent$\bullet$ $H^s(\mathbb{T}^n)$ \textbf{space}: for any integer $s\geq 0$, the $s$-derivative Hilbert space on $\bbT^n$ is
$$
H^s(\mathbb{T}^n)=\left\{U\in L^2(\mathbb{T}^n):\|U\|_{H^s(\mathbb{T}^n)}<\infty\right\},
$$
where 
$$\|U\|_{H^s(\mathbb{T}^n)}= \Bigg(\sum_{\bm{k}\in\mathbb{Z}^n}(1+|\bm{k}|^2)^s|\hat{U}_{\bm{k}}|^2\Bigg)^{1/2},\quad \hat{U}_{\bm{k}}=(U, e^{\imath\bm{k}\cdot\bm{y}})_{L^2(\mathbb{T}^n)}.$$ The semi-norm of $H^s(\mathbb{T}^n)$ is defined as  $|U|_{H^s(\mathbb{T}^n)}=\Big(\sum\limits_{\bm{k}\in\mathbb{Z}^n}|\bm{k}|^{2s}|\hat{{U}}_{\bm{k}}|^2\Big)^{1/2}.$

\noindent$\bullet$ $\overline{H}^s(\mathbb{T}^n)$ \textbf{space}: for any integer $s\geq 0$,  $$\overline{H}^s(\mathbb{T}^n):=\left\{U(\bm{y})\in {H}^s(\mathbb{T}^n): \displaystyle\frac{1}{|\mathbb{T}^n|}\displaystyle\int_{\mathbb{T}^n} U(\bm{y})d\bm{y}=0\right\}$$ equipped with ${H}^s(\mathbb{T}^n)$-norm.

\noindent$\bullet$ $\mathrm{QP}(\mathbb{R}^d)$ \textbf{space}: 
 $\mbox{QP}(\mathbb{R}^d)$ represents the space of all $d$-dimensional quasiperiodic functions.

\noindent$\bullet$ $L^q_{QP}(\mathbb{R}^d)$ \textbf{space}: for any fixed integer  $q\in[0,+\infty),$ denote
$$
L_{QP}^q(\mathbb{R}^d)=\left\{u(\bm{x})\in\mbox{QP}(\mathbb{R}^d):\|u\|_{L_{QP}^q}^q(\bbR^d)= \bbint|u(\bm{x})|^q d\bm{x}<\infty\right\}
$$ 
and
$$
L_{QP}^{\infty}(\mathbb{R}^d)=\left\{u(\bm{x})\in\mbox{QP}(\mathbb{R}^d):\|u\|_{L_{QP}^\infty}(\bbR^d)= \sup_{\bm{x}\in\mathbb{R}^d}|u(\bm{x})| <\infty\right\}.
$$
The inner product $(\cdot,\cdot)_{L_{QP}^2(\mathbb{R}^d)}$ is defined by 
$$
(u,v)_{L_{QP}^2(\mathbb{R}^d)}= \bbint u(\bm{x})\overline{v}(\bm{x})d\bm{x}.
$$
By Parseval's identity \eqref{eqn:parsevel}, we have
$$
\|u\|^2_{{L_{QP}^2(\mathbb{R}^d)}}=\sum_{\bm{\lambda}\in\Lambda}|\hat{u}_{\bm{\lambda}}|^2.
$$

%The spaces $\mathcal{C}_{QP}^{s}(\mathbb{R}^d) $consists of continuous quasiperiodic functions with continuous derivatives
%up to order $s$ in $\mathbb{R}^d. $ The $\mathcal{C}_{QP}^{s}(\mathbb{R}^d)$ norm of $v\in\mathcal{C}_{QP}^{s}(\mathbb{R}^d)$ is defined by
%$$
%\|u\|_{\mathcal{C}_{QP}^{s}}=\sum_{|p|\leq s}\sup_{\bm{x}\in\mathbb{R}^d}|\partial_{\bm{x}}^p u|.
%$$

\noindent$\bullet$ $H^s_{QP}(\mathbb{R}^d)$ \textbf{space}: for any integer $s\geq 0,$ the Sobolev space $H_{QP}^{s}(\mathbb{R}^d)$ comprises all quasiperiodic functions  with partial derivatives order $s$. The inner product $(\cdot,\cdot)_{H_{QP}^{s}(\mathbb{R}^d)}$ is 
$$
(u,v)_{H_{QP}^{s}(\mathbb{R}^d)}=(u,v)_{L_{QP}^{2}(\mathbb{R}^d)}+\sum_{|p|\leq s}(\partial_{\bm{x}}^p u,\partial_{\bm{x}}^p v)_{L_{QP}^{2}(\mathbb{R}^d)},\quad u,v\in H_{QP}^{s}(\mathbb{R}^d)
$$
and equipped with norm
$$
\|u\|^2_{H_{QP}^{s}(\mathbb{R}^d)}=\sum_{\bm{\lambda}\in\Lambda}(1+|\bm{\lambda}|^2)^s|\hat{u}_{\bm{\lambda}}|^2.
$$

\noindent$\bullet$ $\overline{H}^s_{QP}(\mathbb{R}^d)$ \textbf{space}: for any integer $s\geq 0$,  $\overline{H}^s_{QP}(\mathbb{R}^d):=\left\{u(\bm{x})\in H_{QP}^{s}(\mathbb{R}^d): \displaystyle\bbint u(\bm{x})d\bm{x}=0\right\}$ equipped with $H_{QP}^{s}(\mathbb{R}^d)$-norm.

\noindent$\bullet$ $H^{-1}_{QP}(\mathbb{R}^d)$ \textbf{space}: $H^{-1}_{QP}(\mathbb{R}^d)$ represents the dual space of $\overline{H}^{1}_{QP}(\bbR^d)$. The  $H^{-1}_{QP}(\bbR^d)$-norm is defined as
 $$
 \|u\|_{H^{-1}_{QP}(\bbR^d)}=\sup\left\{\langle u, v\rangle\Big|~ u\in{H}_{QP}^{-1}(\bbR^d),~  v\in \overline{H}^1_{QP}(\bbR^d),~ \|v\|_{H^1_{QP}(\bbR^d)}\leq 1\right\},
$$
where $\langle \cdot,\cdot\rangle$ is the pairing between $H^{-1}_{QP}(\bbR^d)$ and $\overline{H}^{1}_{QP}(\bbR^d)$. 

\noindent$\bullet$ $L^2(0, T; {H}^s_{QP}(\mathbb{R}^d))$ \textbf{space}: for any integer $s\geq 0$, the Sobolev space involving time $L^2(0, T; {H}^s_{QP}(\mathbb{R}^d))$ comprises functions mapping time into quasiperiodic Hilbert space. This space is equipped with the norm as follows
$$
\|u\|_{L^2(0, T; {H}^s_{QP}(\mathbb{R}^d))}:=\left(\int_{0}^T\|u(t)\|^2_{{H}^s_{QP}(\mathbb{R}^d)}dt\right)^{1/2}.
$$

For the case of $s=0$, we have $H_{QP}^0(\mathbb{R}^d)=L^2_{QP}(\mathbb{R}^d)$.
To simplify notation, we use $\|\cdot \|_0$,   $\|\cdot\|_s$,  and $\|\cdot\|_{{s},T}$ to denote the norm$\|\cdot\|_{L^2_{QP}(\mathbb{R}^d)}$, $\|\cdot\|_{H_{QP}^{s}(\mathbb{R}^d)}$ and $\|\cdot\|_{L^2(0, T; {H}^s_{QP}(\mathbb{R}^d))}$, respectively.

% To simplify notation, we use $\|\cdot \|_0$ to denote the norm $\|\cdot\|_{L^2_{QP}(\mathbb{R}^d)}$, and $\|\cdot\|_s$ to denote the norm $\|\cdot\|_{H_{QP}^{s}(\mathbb{R}^d)}$. Moreover, we denote the norm $\|\cdot\|_{L^2(0, T; {H}^s_{QP}(\mathbb{R}^d))}$ by $\|\cdot\|_{{s},T}$.
%and the norm $\|\cdot\|_{L^2(0, T; \overline{H}^s_{QP}(\mathbb{R}^d))}$ by $\|\cdot\|_{\bar{s},T}$.
%Define a mapping $\varphi_{\bm{P}}:\mbox{Tri}(\mathbb{T}^n)\to W_{\bm{P}}$ such that
%$$
%f(\bm{x})=\varphi_{\bm{P}}F(\bm{P}^T\bm{x})=F(\bm{P}^T\bm{x}).
%$$

\section{Quasiperiodic parabolic equation}\label{sec:quasiperiodic_parabolic}
Building on the preparation from the previous section, we now discuss the well-posedness of QPE \eqref{eqn:parabolic}. The quasiperiodic coefficient
 $\alpha(\bm{x})\in L^{\infty}_{QP}(\mathbb{R}^d)$ is uniformly elliptic and bounded, i.e., $\forall\bm{x}, \xi\in\bbR^d$,
\begin{equation}\label{eqn:ellipticity}
\gamma_0|\xi|^2\leq\xi^T\alpha(\bm{x})\xi\leq\gamma_1|\xi|^2,\quad \gamma_0, \gamma_1>0    
\end{equation}
holds. The source term $f(\bm{x}, t)\in L^2(R_T)$ and initial value $u^0(\bm{x})\in L^2_{QP}(\mathbb{R}^d)$.

%  To study the weak formulation of quaisperiodic parabolic problem \eqref{eqn:parabolic}, we introduce the following  
By multiplying \eqref{eqn:parabolic} with an arbitrary test function $v\in \overline{H}^1_{QP}(\bbR^d)$ and integrating over the $\bbR^d$, we have
 \begin{equation}
 (u', v)+ B[u,v;t]= (f,v) \quad\left('=\frac{d}{dt}\right)
 \end{equation}
for each $0\leq t\leq T$, where $B[u, v ;t]$ denotes the time-dependent bilinear form defined as 
$$
B[u,v;t]:=\bbint\alpha\nabla u(\cdot, t)\nabla vd\bm{x}\quad u\in L^2(0, T; \overline{H}^1_{QP}(\bbR^d)), ~v\in \overline{H}^1_{QP}(\bbR^d).
$$
%with $u'\in L^2(0, T; H^{-1}_{QP}(\mathbb{R}^d))$
The associated variational formulation of \eqref{eqn:parabolic} is to seek a $u\in L^2(0, T; \overline{H}^1_{QP}(\bbR^d))$ with $u'\in L^2(0, T; {H}^{-1}_{QP}(\bbR^d))$  such that
 \begin{equation}\label{eqn:variational}
 \left\{\begin{aligned}
     &(u', v)+B[u,v;t]=(f, v)\quad \forall v\in \overline{H}^1_{QP}(\bbR^d),\\
     &u(\cdot,0)=u^0.
 \end{aligned}
 \right.
 \end{equation}
 
 The variational problem \eqref{eqn:variational} is well-posed, as stated in the following theorem.
 \begin{theorem}\label{thm:wellposed}
   There exists a unique solution of the problem \eqref{eqn:variational}.
\end{theorem}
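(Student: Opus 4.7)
The plan is to follow the classical Galerkin scheme for linear parabolic equations (in the spirit of Lions--Evans), transported into the quasiperiodic Hilbert-space framework of Section~\ref{sec:quasiperiodic_function}. The essential ingredient is the Gelfand triple $\overline{H}^1_{QP}(\bbR^d)\hookrightarrow L^2_{QP}(\bbR^d)\hookrightarrow H^{-1}_{QP}(\bbR^d)$ with dense, continuous embeddings, together with continuity and a G{\aa}rding-type estimate for the bilinear form $B[\cdot,\cdot;t]$.

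First I would establish the two structural properties of $B[\cdot,\cdot;t]$: the boundedness $|B[u,v;t]|\leq\gamma_1\|u\|_1\|v\|_1$ follows at once from $\alpha\in L^\infty_{QP}(\bbR^d)$ and the upper bound in \eqref{eqn:ellipticity}, while the lower bound in \eqref{eqn:ellipticity} combined with Parseval's identity yields
$$
B[u,u;t]=\bbint\alpha|\nabla u|^2\,d\bm{x}\geq \gamma_0|u|^2_{H^1_{QP}(\bbR^d)}\geq \gamma_0\|u\|_1^2-\gamma_0\|u\|_0^2.
$$
It is worth stressing that a uniform Poincar\'e inequality cannot be expected on $\overline{H}^1_{QP}(\bbR^d)$ when $n>d$, since the quasiperiodic frequencies $\{\bm{P}\bm{k}:\bm{k}\in\bbZ^n\setminus\{0\}\}$ accumulate at zero (small divisor phenomenon); one therefore settles for G{\aa}rding's inequality and compensates via Gronwall.

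Next I would construct a Galerkin sequence. A natural basis for $\overline{H}^1_{QP}(\bbR^d)$ is the orthonormal family $\{e^{\imath\bm{P}\bm{k}\cdot\bm{x}}\}_{\bm{k}\in\bbZ^n\setminus\{0\}}$; truncating to $V_m=\mathrm{span}\{e^{\imath\bm{P}\bm{k}\cdot\bm{x}}:0<|\bm{k}|_\infty\leq m\}$, the finite-dimensional problem of finding $u_m\in V_m$ with $(u_m',v)+B[u_m,v;t]=(f,v)$ for all $v\in V_m$ and with $u_m(0)$ the $L^2_{QP}$-projection of $u^0$ onto $V_m$ reduces to a linear ODE system in the Fourier coefficients, uniquely solvable on $[0,T]$ by Picard--Lindel\"of. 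Testing with $v=u_m$ and invoking the G{\aa}rding estimate and Young's inequality yields
$$
\frac{1}{2}\frac{d}{dt}\|u_m\|_0^2+\frac{\gamma_0}{2}\|u_m\|_1^2\leq C_0\|u_m\|_0^2+\frac{1}{2\gamma_0}\|f(\cdot,t)\|_0^2,
$$
after which Gronwall gives uniform bounds for $u_m$ in $L^\infty(0,T;L^2_{QP})\cap L^2(0,T;\overline{H}^1_{QP})$; the equation itself then forces $u_m'$ to be bounded in $L^2(0,T;H^{-1}_{QP})$.

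Finally, weak(-$\ast$) compactness extracts a limit $u$ along a subsequence, and density of $\bigcup_m V_m$ in $\overline{H}^1_{QP}(\bbR^d)$ permits passage to the limit in the Galerkin identity, producing a function $u\in L^2(0,T;\overline{H}^1_{QP})$ with $u'\in L^2(0,T;H^{-1}_{QP})$ that satisfies \eqref{eqn:variational}. The embedding $\{u\in L^2(0,T;\overline{H}^1_{QP}):u'\in L^2(0,T;H^{-1}_{QP})\}\hookrightarrow C([0,T];L^2_{QP})$ gives meaning to the trace $u(\cdot,0)$, and a standard integration-by-parts in time together with an arbitrary choice of test function vanishing at $t=T$ verifies $u(\cdot,0)=u^0$. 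Uniqueness is obtained by applying the same energy estimate to $w=u_1-u_2$ with vanishing data and concluding $w\equiv 0$ through Gronwall. The main obstacle I anticipate is transferring the classical Gelfand-triple machinery -- particularly the continuous embedding into $C([0,T];L^2_{QP})$ and the density of quasiperiodic trigonometric polynomials in $\overline{H}^1_{QP}(\bbR^d)$ -- through the parent-function correspondence on $\bbT^n$, given that none of the embeddings $\overline{H}^1_{QP}\hookrightarrow L^2_{QP}$ are compact in the quasiperiodic setting.
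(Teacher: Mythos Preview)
Your proposal is correct and follows essentially the same route as the paper: Galerkin approximation in the quasiperiodic trigonometric basis, an energy estimate of the form $\gamma_0\|u_m\|_1^2\le B[u_m,u_m;t]+\gamma\|u_m\|_0^2$, and then the standard weak-compactness/uniqueness argument from Evans, which the paper also cites.

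The one substantive difference concerns how the lower bound on $B$ is obtained. You argue that a Poincar\'e inequality should fail on $\overline{H}^1_{QP}(\bbR^d)$ when $n>d$ because the nonzero frequencies $\bm{P}\bm{k}$ accumulate at the origin, and you therefore work with the G{\aa}rding inequality and close via Gronwall. The paper instead invokes a quasiperiodic Poincar\'e inequality $\|u\|_1\le C|u|_1$ established in \cite{jiang2024projection} to obtain \eqref{eqn:energy_ellptic} directly. Both lead to the same energy estimate (indeed the displayed inequality \eqref{eqn:energy_ellptic} already contains the $\gamma\|u_m\|_0^2$ term, so Gronwall is used either way), and the downstream argument is identical. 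Your version has the virtue of not relying on the Poincar\'e claim, while the paper's version, if the inequality from \cite{jiang2024projection} is accepted, yields slightly cleaner constants; the logical skeleton of the proof is the same in both cases.
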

The proof of this result relies on an energy estimate in the quasiperiodic case, which is given in the following theorem.

 \begin{theorem}[Energy estimate]\label{lemma:energy_estimate}
 There exits a constant $C$ such that
 $$
 \max_{0\leq t\leq T}\|u_m(t)\|_{0}+\|u_m\|_{1,T} + \|u'_m\|_{-1,T}\leq C\|f\|_{0,T}+\|u^0\|_0,
 $$
 where 
 $$u_m(t):=\sum_{j=1}^m\hat{u}_m^j(t)e^{\imath\bm{\lambda}_j\cdot\bm{x}}\in\overline{H}^1_{QP}(\mathbb{R}^d)$$
 for each $0\leq t\leq T$.
 \end{theorem}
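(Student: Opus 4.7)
The plan is to mirror the classical Galerkin energy estimate for parabolic equations (e.g.\ Evans, \S 7.1.2), adapted to the quasiperiodic setting where integrals are replaced by the Bohr averages $\bbint$ and the Fourier basis $\{e^{\imath\bm{\lambda}_j\cdot\bm{x}}\}$ plays the role of the Galerkin basis. I first assume that $u_m$ satisfies the natural projected variational identity
$$
(u_m',v)+B[u_m,v;t]=(f,v)\quad \forall v\in \mathrm{span}\{e^{\imath\bm{\lambda}_j\cdot\bm{x}}\}_{j=1}^m,
$$
with $u_m(0)$ being the $L^2_{QP}$-projection of $u^0$ onto this span, so that $\|u_m(0)\|_0\le \|u^0\|_0$.

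\textbf{Step 1: $L^2$ and $H^1$ bounds.} I would choose the admissible test function $v=u_m(t)$. Because $u_m$ is a finite Fourier sum, $\tfrac{1}{2}\tfrac{d}{dt}\|u_m\|_0^2=(u_m',u_m)$. The ellipticity assumption \eqref{eqn:ellipticity} yields $B[u_m,u_m;t]\ge \gamma_0\,|u_m|_{H^1_{QP}}^2$, which by the zero-mean property of $u_m$ (all frequencies $\bm{\lambda}_j$ are nonzero) is equivalent to $\gamma_0\|u_m\|_1^2$ up to a fixed constant (Poincaré-type inequality on $\overline{H}^1_{QP}$). Cauchy--Schwarz and Young's inequality give $(f,u_m)\le \tfrac{1}{2\varepsilon}\|f\|_0^2+\tfrac{\varepsilon}{2}\|u_m\|_0^2$. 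Choosing $\varepsilon$ appropriately and integrating in $t\in(0,T)$ produces
$$
\|u_m(t)\|_0^2+\gamma_0\int_0^t\|u_m\|_1^2\,ds\le \|u_m(0)\|_0^2+C\int_0^t\|f\|_0^2\,ds+C\int_0^t\|u_m\|_0^2\,ds.
$$
Gronwall's inequality then controls $\max_{0\le t\le T}\|u_m(t)\|_0^2$ by $\|u^0\|_0^2+\|f\|_{0,T}^2$, and feeding this back gives the $\|u_m\|_{1,T}$ bound.

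\textbf{Step 2: $H^{-1}$ bound on $u_m'$.} For any $v\in \overline{H}^1_{QP}(\bbR^d)$ with $\|v\|_1\le 1$, split $v=v_1+v_2$ where $v_1$ is the $L^2_{QP}$-orthogonal projection onto the span $\mathrm{span}\{e^{\imath\bm{\lambda}_j\cdot\bm{x}}\}_{j=1}^m$. Orthogonality and Parseval's identity \eqref{eqn:parsevel} give $\|v_1\|_1\le \|v\|_1\le 1$, while $(u_m',v_2)=0$ since $u_m'$ lies in the same span. Thus $\langle u_m',v\rangle=(u_m',v_1)=(f,v_1)-B[u_m,v_1;t]$, which by Cauchy--Schwarz, boundedness of $\alpha$ in \eqref{eqn:ellipticity}, and $\|v_1\|_1\le 1$ is bounded by $C(\|f\|_0+\|u_m\|_1)$. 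Squaring, integrating over $(0,T)$, and inserting the Step 1 estimates yields $\|u_m'\|_{-1,T}\le C(\|f\|_{0,T}+\|u^0\|_0)$.

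\textbf{Main obstacle.} The only nonstandard point is that the energy method uses the Bohr average $\bbint$ rather than an ordinary integral, so I must verify that the formal identities $(u_m',u_m)=\tfrac{1}{2}\tfrac{d}{dt}\|u_m\|_0^2$ and Parseval's equality \eqref{eqn:parsevel} commute with $d/dt$ for the finite Fourier sums $u_m(t)$; this follows because $u_m(t)$ has only finitely many frequencies, reducing the Bohr average to a finite sum of squared coefficients to which standard calculus applies. Once this reduction is made, the remainder is a direct transcription of the classical parabolic energy estimate, with the Poincaré inequality on $\overline{H}^1_{QP}$ providing the step from the seminorm $|\cdot|_{H^1_{QP}}$ to the full norm $\|\cdot\|_1$.
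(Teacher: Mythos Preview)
Your proposal is correct and follows essentially the same route as the paper: the paper's proof simply observes that the only nonstandard point in transplanting the classical Galerkin energy argument (Evans, \S 7.1.2) to the quasiperiodic setting is the coercivity estimate $\gamma_0\|u_m\|_1^2\le B[u_m,u_m;t]+\gamma\|u_m\|_0^2$, which is supplied by the quasiperiodic Poincar\'e inequality on $\overline{H}^1_{QP}(\bbR^d)$, and then defers all remaining details to Evans. You have spelled out those details explicitly, including the $H^{-1}_{QP}$ bound via orthogonal splitting, and correctly flagged the Poincar\'e step and the legitimacy of differentiating the Bohr inner product on finite Fourier sums as the only genuinely new ingredients.
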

 \begin{proof}
 The proof of this theorem follows the classical approach, with the key difference being that the quasiperiodic function space is defined over the entire space $\mathbb{R}^d$. Therefore, it suffices to verify whether the following inequality holds: for $0\leq t\leq T$,
 \begin{equation}\label{eqn:energy_ellptic}
     \gamma_0\|u_m\|^2_1\leq B[u_m, u_m; t]+\gamma\|u_m\|^2_0,~\gamma_0,\gamma>0.
 \end{equation}
  As shown in \cite{jiang2024projection}, we have proved the Poincare's inequality in the quasiperiodic case, which states that 
 $$
 \|u\|_1\leq C|u|_1,~~\forall u\in\overline{H}^1_{QP}(\mathbb{R}^d).
 $$
This result guarantees that \eqref{eqn:energy_ellptic} holds. The remainder of the proof follows the standard approach, and further details can be found in \cite[pp.377]{evans2022partial}.
 \end{proof}

 Based on the  energy estimate established in Theorem \ref{lemma:energy_estimate},
we can follow a similar strategy to that in \cite[pp.378-- pp.379]{evans2022partial} prove the well-posedness of the variational problem \eqref{eqn:variational}.

\section{Numerical methods}\label{sec:numerical_methods}
In this section, we introduce the PM-BDF2 method for the QPE \eqref{eqn:parabolic}, which employs PM to discretize in the spatial direction and utilizes the second-order backward differentiation formula (BDF2) to discretize in the temporal direction. Furthermore, we present the numerical implementation for solving the QPE \eqref{eqn:parabolic} and provide an analysis of the corresponding computational complexity.

\subsection{PM and BDF2 scheme}
In this subsection, we combine the PM  with the BDF2 scheme for efficient and stable numerical solutions to QPE \eqref{eqn:parabolic}. 
The PM embedded a quasiperiodic function 
$u(\bm{x})$ into a high-dimensional periodic function $U(\bm{y})$. {Specifically, the PM allows us to  compute the  Fourier coefficients  of the periodic parent function, leveraging the efficiency of the FFT algorithm. The quasiperiodic structure is then obtained by projecting the high-dimensional periodic structure onto the irrational manifold.} 

For an integer $N\in\mathbb{N}^{+}$ and a given projection matrix $\bm{P}\in\mathbb{P}^{d\times n}$, we denote	 
\begin{align*}
	K_N^{n}=\left\{\bm{k}=(k_j)_{j=1}^{n}\in\mathbb{Z}^{n}:-N/2\leq k_j <N/2 \right\}.
\end{align*}
Next, we discretize the torus $\mathbb{T}^{n}$ and denote the discrete tours as
$$
	\mathbb{T}_N^n=\left\{\bm{y_j}=( 2\pi j_1/N, \cdots, 2\pi j_n/N )\in\mathbb{T}^{n}: 0\leq j_1,\cdots, j_n\leq N-1, ~\bm{j}=(j_1,\cdots, j_n)\right\}.
$$
This implies that we set $N$ discrete points uniformly in each spatial direction of the $n$-dimensional space. Consequently, there are $D=N^n$ discrete points in total. Let $\mathcal{G}_N$ denote the grid function space
	$$
	\mathcal{G}_N=\left\{U:\mathbb{Z}^{n}\mapsto\mathbb{C},~ U\text{ is a periodic function on} ~
     \mathbb{T}_N^n\right\}
	$$
	and define the $\ell^2$-inner product $\left(\cdot , \cdot \right)_N$ on $\mathcal{G}_N$ as
	$$
	\left(\phi_1,\phi_2\right)_N=\frac{1}{D}\sum_{\bm{y_j}\in\mathbb{T}^n_N}\phi_1(\bm{y_j})\overline{\phi}_2(\bm{y_j}),~~\phi_1,\phi_2\in \mathcal{G}_N.
	$$
For $\bm{k}_1, \bm{k}_2\in\mathbb{Z}^n$, $\bm{P}\in\mathbb{P}^{d\times n}$, we denote $\bm{Pk}_{1}\overset{N}{=}\bm{Pk}_2$, if $\bm{k}_1, \bm{k}_2$ satisfy 
$$
\bm{Pk}_1=\bm{Pk}_2 +N\bm{Pm},\quad N\in\mathbb{Z},\quad\bm{m}\in\mathbb{Z}^n.
$$ 
Then we have the following discrete orthogonality
\begin{equation}\label{eq:orth}
\Big(e^{\imath\bm{k}_1\cdot\bm{y_j}},e^{\imath\bm{k}_2\cdot\bm{y_j}}\Big)_N=
		\left\{
		\begin{aligned}
			&1, ~~~~\bm{k}_1\overset{N}{=}\bm{k}_2,\\
		  &0,~~~~ \mbox{otherwise},\\
		\end{aligned}    			
		\right.
	\end{equation}
here, $\bm{P}$ is an identity matrix of $n$-order. Using \eqref{eq:orth}, we can calculate the discrete Fourier coefficients of a high-dimensional periodic function 
 $U(\bm{y})$ as follows 
$$
\widetilde{U}_{\bm{k}}=\Big(U(\bm{y_j}),e^{\imath\bm{k}\cdot\bm{y_j}}\Big)_N=\frac{1}{D}\sum_{\bm{y_j}\in\mathbb{T}^n_N}U(\bm{y_j})e^{-\imath\bm{k}\cdot\bm{y_j}}.
$$
{The following lemma, based on the Birkhoff’s ergodic theorem \cite{pitt1942some}, clarifies  that the Fourier-Bohr transform of the quasiperiodic function $u$ is equivalent to the Fourier transform of its parent function $U$. The detailed proof of this result can refer to  \cite[Theorem 4.1]{jiang2024numerical}.
\begin{lemma}
    For a given quasiperiodic function $$u(\bm{x})=U(\bm{P}^T\bm{x}),~\bm{x}\in\bbR^d,$$
    where $U$ is  its parent function defined on the tours $\bbT^n$ and $\bm{P}$ is the projection matrix, we have
    $$
    \hat{u}_{\bm{\lambda_k}}=\hat{U}_{\bm{k}},~~\bm{\lambda_k}=\bm{Pk}.
    $$
\end{lemma}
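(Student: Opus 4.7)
The plan is to reduce the claimed identity to an ergodic-averaging statement on the torus $\bbT^n$ and then invoke the Birkhoff/Weyl-type result that is already cited. Starting from the Fourier--Bohr definition \eqref{eqn:continuous_ft} with $\bm{\lambda}=\bm{\lambda_k}=\bm{Pk}$, I would substitute $u(\bx)=U(\bm{P}^T\bx)$ and apply the transpose identity $\bm{Pk}\cdot\bx=\bk\cdot\bm{P}^T\bx$ to rewrite
$$
\hat{u}_{\bm{\lambda_k}} \;=\; \lim_{L\to\infty}\frac{1}{(2L)^d}\int_{\Omega_L} U(\bm{P}^T\bx)\,e^{-\imath\bk\cdot\bm{P}^T\bx}\,d\bx.
$$
Next I would introduce the auxiliary function $V(\by):=U(\by)\,e^{-\imath\bk\cdot\by}$, which, being the product of two $2\pi$-periodic functions in each coordinate, remains periodic on $\bbT^n$. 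This collapses the quantity of interest into a single ergodic average
$$
\hat{u}_{\bm{\lambda_k}} \;=\; \lim_{L\to\infty}\frac{1}{(2L)^d}\int_{\Omega_L} V(\bm{P}^T\bx)\,d\bx.
$$

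The second step is to verify the hypothesis that powers this limit. The linear map $\bx\mapsto \bm{P}^T\bx \pmod{2\pi\bbZ^n}$ defines a continuous $\bbR^d$-action on $\bbT^n$, and the Fourier/Weyl non-resonance criterion for unique ergodicity demands that $\bm{P}\bm{m}\neq\bo$ for every $\bm{m}\in\bbZ^n\setminus\{\bo\}$. This is precisely the $\bbQ$-linear independence of the columns $\bm{p}_1,\ldots,\bm{p}_n$ that defines the projection matrix class $\mathbb{P}^{d\times n}$, so the hypothesis is built into the statement of the lemma.

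Applying the multidimensional Birkhoff ergodic theorem (i.e.\ the $\bbR^d$-action Wiener-type result; the paper cites \cite{pitt1942some} and \cite[Theorem~4.1]{jiang2024numerical} for this) then yields
$$
\lim_{L\to\infty}\frac{1}{(2L)^d}\int_{\Omega_L} V(\bm{P}^T\bx)\,d\bx \;=\; \frac{1}{|\bbT^n|}\int_{\bbT^n} V(\by)\,d\by \;=\; \frac{1}{|\bbT^n|}\int_{\bbT^n} U(\by)\,e^{-\imath\bk\cdot\by}\,d\by,
$$
and recognizing the right-hand side as $(U,e^{\imath\bk\cdot\by})_{L^2(\bbT^n)}=\hat{U}_{\bk}$ closes the identification $\hat{u}_{\bm{\lambda_k}}=\hat{U}_{\bk}$.

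The main obstacle I foresee is not the algebraic manipulation but the justification of the ergodic limit itself. The classical one-parameter Birkhoff theorem gives almost-everywhere convergence along a single flow, whereas here we need convergence of the Cesàro average over the expanding symmetric cubes $\Omega_L=[-L,L]^d$ of an $\bbR^d$-action. For continuous $V$ this is a clean consequence of Weyl equidistribution, but for $V\in L^1(\bbT^n)$ one must appeal to the multiparameter/Wiener version, and verify that cubes form an admissible Følner family. Since the paper explicitly defers this technical point to its earlier reference \cite[Theorem~4.1]{jiang2024numerical}, my proof would cite that result after carefully checking that our non-resonance hypothesis matches theirs, rather than re-deriving the ergodic theorem from scratch.
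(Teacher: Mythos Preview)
Your proposal is correct and matches the paper's indicated approach: the paper does not supply an in-line proof but states that the lemma is ``based on the Birkhoff's ergodic theorem'' and refers to \cite[Theorem~4.1]{jiang2024numerical} for the details, which is exactly the ergodic-averaging route you outline (substitute $u=U\circ\bm{P}^T$, use $\bm{Pk}\cdot\bx=\bk\cdot\bm{P}^T\bx$, and invoke the non-resonance condition built into $\mathbb{P}^{d\times n}$ to replace the spatial Ces\`aro mean by the torus average). Your caveat about needing the multiparameter/Wiener version of the ergodic theorem for cubes $\Omega_L$ is well placed and is precisely the technical content that the paper outsources to the cited reference.
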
}

{Consequently, the PM assigns $\widetilde{U}_{\bm{k}}$ as the Fourier-Bohr coefficient $\widetilde{u}_{\bm{\lambda_k}}, ~\bm{\lambda_k}=\bm{Pk}$.} Therefore, we can define the \emph{discrete Fourier-Bohr transform} of quasiperiodic function $u(\bm{x})$ in form
\begin{equation}\label{eqn:dft}  u(\bm{x_j})=\sum_{\bm{\lambda_k}\in\Lambda_{N}^{d}}\widetilde{u}_{\bm{\lambda_k}}e^{\imath\bm{\lambda_k}\cdot\bm{x_j}},\quad \Lambda_N^d=\left\{\bm{\lambda_k}: \bm{\lambda_k}=\bm{Pk}, ~\bm{k}\in K_N^n\right\},
\end{equation}
where $\bm{x_j}\in X_{\bm{P}}:=\left\{\bm{x_j}=\bm{Py_j}: ~\bm{y_j}\in\mathbb{T}_N^n,~ \bm{P}\in\mathbb{P}^{d\times n}\right\}$ are \emph{collocation points}. 

The truncation of quasiperiodic function $u(\bm{x})$ can be expressed as   
\begin{equation}\label{eqn:truncation}
    T_N u(\bm{x})=\sum_{\bm{\lambda_k}\in\Lambda_{N}^d}\hat{u}_{\bm{\lambda_k}} e^{\imath\bm{\lambda_k}\cdot\bm{x}}:=u_T(\bm{x}),
\end{equation}
 and the trigonometric interpolation of
$u(\bm{x})$ is
\begin{equation}\label{eqn:interpolation} I_N u(\bm{x})=\sum_{\bm{\lambda_k}\in\Lambda_{N}^{d}}\widetilde{u}_{\bm{\lambda_k}}e^{\imath\bm{\lambda_k}\cdot\bm{x}}:=u_N(\bm{x}).
\end{equation}
 Consequently, $u(\bm{x_j}) = I_Nu(\bm{x_j})$. From a computational perspective, it is apparent that the PM
 can use the multidimensional FFT algorithm to obtain the Fourier coefficients of a quasiperiodic
 function.

% Function approximation theory Section \ref{lemma:interpolation} has shown
%  that the PM has spectral convergence. From the computational perspective, it is apparent
%  that the PM can use the $n$-dimensional fast Fourier transform (FFT) to obtain Fourier
%  coefficients. 
 
% \subsection{Second-order backward differentiation formula (BDF2) method}
The BDF2 scheme is a common time-discrete method for solving parabolic equations in the time direction. It is particularly known for its high stability. For the time grid points $0 = t_0 < t_1 < \cdots < t_M = T$, where $t_m = m\tau, m = 0,1,...,M$, and the time step size $\tau = T/M$, the BDF2  numerically approximates the solution at each time step through the following relation:
\begin{equation}{\label{eqn:bdf2}}
    (3-2\tau \mathcal{L})u^{m}\approx 4 u^{m-1}- u^{m-2}+2\tau  f^{m},\quad 1\leq m\leq M.
\end{equation}

\subsection{PM-BDF2 discretization}
In this subsection, we use the  PM-BDF2 to discretize the QPE \eqref{eqn:parabolic}, outlining the two steps involved in the process.

 $\bullet$ \textbf{PM-BDF2-Step 1:} From $t_0$ to $t_1$, we employ the PM to discretize \eqref{eqn:parabolic} in space direction at $t=t_0$:
$$
\alpha_N=\alpha(\bm{x_j})= \sum_{\bm{\lambda}_\alpha\in \Lambda_{\alpha, N}^d}\widetilde{\alpha}_{\bm{\lambda}_\alpha}e^{\imath\bm{\lambda}_\alpha\cdot\bm{x_j}}, \quad \widetilde{\alpha}_{\bm{\lambda}_\alpha}=\widetilde{A}_{\bm{k}_A}=\frac{1}{D}\sum_{\bm{k}_A\in\bbZ^n}A(\bm{y_j})e^{-\imath\bm{k}_A\cdot\bm{y_j}},
$$
$$
u_N^0:=u(\bm{x_j}, t_0)=\sum_{\bm{\lambda}_u\in \Lambda_{u, N}^d}\widetilde{u}_{\bm{\lambda}_u}(t_0)e^{\imath\bm{\lambda}_u\cdot\bm{x_j}},\quad \widetilde{u}_{\bm{\lambda}_u}(t_0)=\widetilde{U}_{\bm{k}_U}(t_0)=\frac{1}{D}\sum_{\bm{k}_U\in\bbZ^n}U(\bm{y_j}, t_0)e^{-\imath\bm{k}_U\cdot\bm{y_j}},
$$
$$
f_N^0:=f(\bm{x_j}, t_0)=\sum_{\bm{\lambda}_f\in \Lambda_{f, N}^d}\widetilde{f}_{\bm{\lambda}_f}(t_0)e^{\imath\bm{\lambda}_f\cdot\bm{x_j}},\quad \widetilde{f}_{\bm{\lambda}_f}(t_0)=\widetilde{F}_{\bm{k}_F}(t_0)=\frac{1}{D}\sum_{\bm{k}_F\in\bbZ^n}F(\bm{y_j}, t_0)e^{-\imath\bm{k}_F\cdot\bm{y_j}},
$$
where $\bm{x_j}\in X_{\bm{P}}$ and $A, U, F$ are parent functions of $\alpha, u, f$, respectively. 
 
 {In the first time step, we use the first-order backward differentiation formula (BDF1) because only the current time point and the initial condition are available.} Next, we define the finite dimensional linear
 subspace of $\mathrm{QP}(\mathbb{R}^d)$ as $V^N:=\mathrm{span}\big\{e^{\imath\bm{\lambda}_v\cdot\bm{x}}:\bm{\lambda}_v\in\Lambda_{u, N}^d,\bm{x}\in\mathbb{R}^{d}\}.$ We then select a test function $v$ from the space $\overline{V}^N:=\left\{ v\in V^N:~ \widetilde{v}_{\bm{0}}=0\right\}\subseteq\overline{{H}}_{QP}^1(\mathbb{R}^d)$.   Our goal in the following is to find an approximate solution of \eqref{eqn:variational} in $\overline{V}^N$, i.e., $u_N^0\in \overline{V}^N$, such  that
\begin{equation}
    \left(\frac{u_N^1-u_N^0}{\tau}, v\right)_N+(\alpha_N\nabla u_N^0, \nabla v)_N\approx(f_N^0, v)_N \quad \forall v\in\overline{V}^N,
\end{equation}
 Combining with the orthogonality of the basis function, we obtain the  scheme
\begin{equation}\label{eqn:pm_bdf2_1_discrete}
   \widetilde{U}_{\bm{k}_V}(t_1)\approx\widetilde{U}_{\bm{k}_V}(t_0) -\tau\sum_{\bm{k}_U\in K_N^n}\widetilde{A}_{\bm{k}_V\overset{N}{-}\bm{k}_U}(\bm{Pk}_V)^T(\bm{Pk}_U)\widetilde{U}_{\bm{k}_V}(t_0)+\tau\widetilde{F}_{\bm{k}_V}(t_0),
\end{equation}
where $\bm{k}_{V}\overset{N}{-}\bm{k}_{U}:=(\bm{k}_{V}-\bm{k}_{U}) (\mathrm{mod}~N)$.
% Note that for a given vector $\bm{k}=(k_1, \cdots, k_n)\in\mathbb{Z}^n$, we have $\bm{k}(\mathrm{mod}~N)\equiv (k_1(\mathrm{mod}~N),\cdots, k_n(\mathrm{mod}~N)).$

The discrete scheme \eqref{eqn:pm_bdf2_1_discrete} is formulated with respect to the tensor index $\bm{k}=(k_l)_{l=1}^n\in K_N^n$. To solve it numerically, we apply the tensor-vector-index conversion. 
%     For a given tensor $\mathcal{T}\in\mathbb{C}^{N_1\times\cdots\times N_n}$, we denote its size vector as
% $$
% \bm{N}=(N_k)_{k=1}^n, \quad N_k\in\mathbb{N}^+
% $$
% and the set
% $$
% K_{\bm{N}}^n=\left\{\bm{i}=(i_k)_{k=1}^n\in\mathbb{Z}^n: -N_k/2\leq i_k< N_k/2\right\}.
% $$
% Let $\bm{i}=(i_1, \cdots, i_n)\in K_{\bm{N}}^n$ be the index of $\mathcal{I}$.
The corresponding  \emph{convert bijection} is defined as
\begin{equation}
     \begin{aligned}
    \mathcal{C}: \quad&K_N^n \rightarrow \mathbb{N},\\
	&\bm{k}\xrightarrow{\mathcal{C}} i, 
     \end{aligned}
\end{equation}
 where $i$ is determined by the rule
\begin{equation}
    i=\sum_{l=1}^{n} \overline{k}_l N^{n-l},\quad \overline{k}_l:= k_l(\mathrm{mod}~N).
\end{equation}
The index inverse conversion $\mathcal{C}^{-1}$ is defined by
\begin{equation}	k_l=
		\left\{
		\begin{aligned}
			&\overline{k}_l, ~~~~0\leq\overline{k}_l< N/2, \\
		  &\overline{k}_l-N,~~~~ N/2\leq\overline{k}_l< N,\\
		\end{aligned}    			
		\right.
  \quad \overline{k}_{l}=\Big\lfloor i~ (\mathrm{mod}~N^{n-l+1})/ N^{n-l} \Big\rfloor,
	\end{equation}
where  $\lfloor \cdot\rfloor$ is the  floor symbol. 
% We refer to the bijection $\mathcal{C}$ as the tensor-vector-index conversion. 

By using the tensor-vector-index conversion $\mathcal{C}$,  we obtain
\begin{equation}\label{eqn:conversion}
    \bm{k}_V\xrightarrow{\mathcal{C}} i, \quad  \bm{k}_U\xrightarrow{\mathcal{C}} j,
\end{equation}
$$
B_{ij}=\widetilde{A}_{\bm{k}_V\overset{N}{-}\bm{k}_U}, \quad W_{ij}=(\bm{Pk}_V)^T(\bm{Pk}_U),
$$
$$
\mathcal{U}^1(i)=\widetilde{U}_{\bm{k}_V}(t_1),\quad \mathcal{U}^0(i)=\widetilde{U}_{\bm{k}_V}(t_0), \quad \mathcal{F}^0(i)=\widetilde{F}_{\bm{k}_V}(t_0).
$$
Then the fully discrete scheme in this step  can be derived as
\begin{equation}\label{eqn:pm_bdf2_1}
    \mathcal{U}^1\approx \mathcal{U}^0- \tau Q\mathcal{U}^0+\tau\mathcal{F}^0,
\end{equation}
where
$$
Q=B\circ W,    
$$
$$
Q, B, W\in\bbC^{D\times D}.
$$

It's worth mentioning that $B$ is a multi-level block circulant matrix, and its structure of $B$ can be depicted by 
\begin{scriptsize}
\begin{equation*}
      \begin{gathered}
                B_{11}^{(1)}\\
                \begin{bNiceMatrix}[name=a]
                    \widetilde{A}_{(\bm{0}, 0)}&\widetilde{A}_{(\bm{0}, -1)}& \cdots&\widetilde{A}_{(\bm{0},1)}\\
                    \widetilde{A}_{(\bm{0}, 1)}&	\widetilde{A}_{(\bm{0}, 0)}&\cdots &	\widetilde{A}_{(\bm{0}, 2)}\\
                    \vdots&\vdots&\cdots&\vdots\\
                    \widetilde{A}_{(\bm{0}, -1)}&	\widetilde{A}_{(\bm{0}, -2)}& \cdots&	\widetilde{A}_{(\bm{0}, 0)}
                \end{bNiceMatrix}
            \end{gathered}
            \begin{matrix}
                \\
            \longrightarrow
        \end{matrix}\ \ 
        \begin{gathered}
            B_{11}^{(2)}\\
            \begin{bNiceMatrix}[name=b]
                B_{11}^{(1)}&B_{12}^{(1)}& \cdots&B_{1N}^{(1)}\\
                B_{1N}^{(1)}&	\dbox{$B_{11}^{(1)}$}&\cdots &B_{1(N-1)}^{(1)}\\
      \vdots&\vdots&\cdots&\vdots\\
                B_{12}^{(1)}&	B_{13}^{(1)}&\cdots &B_{11}^{(1)}
            \end{bNiceMatrix}
            \end{gathered}
        \begin{matrix}
                \underset{\longrightarrow}{...}
            \end{matrix}
            \begin{gathered}
        B\\
        \begin{bNiceMatrix}[name=c]
            B_{11}^{(n-1)}&B_{12}^{(n-1)}&\cdots &B_{1N}^{(n-1)}\\
            B_{1N}^{(n-1)}&	\dbox{$B_{11}^{(n-1)}$}&\cdots &B_{1(N-1)}^{(n-1)}\\
            \vdots&\vdots&\cdots&\vdots\\
            B_{12}^{(n-1)}&	B_{13}^{(n-1)}&\cdots &B_{11}^{(n-1)}
		\end{bNiceMatrix},\end{gathered}
\end{equation*}
\end{scriptsize}
\tikz[remember picture, overlay]
{
\draw[cyan][->][color=black] ($(a-1-1)-(0.6,-0.1)$) to[out=180,in=180] ($(a-2-1)-(0.6,-0.1)$);
\draw[cyan][->][color=black] ($(a-2-1)-(0.60,0)$) to[out=180,in=180] ($(a-3-1)-(0.60,0)$);
\draw[cyan][->][color=black] ($(a-3-1)-(0.60,0)$) to[out=180,in=180] ($(a-4-1)-(0.60,0)$);
\draw[cyan][->][color=black] ($(b-1-1)-(0.40,0)$) to[out=180,in=180] ($(b-2-1)-(0.40,0)$);
\draw[cyan][->][color=black] ($(b-2-1)-(0.40,0)$) to[out=180,in=180] ($(b-3-1)-(0.40,0)$);
\draw[cyan][->][color=black] ($(b-3-1)-(0.40,0.1)$) to[out=180,in=180] ($(b-4-1)-(0.40,0)$);
\draw[cyan][->][color=black] ($(c-1-1)-(0.55,0)$) to[out=180,in=180] ($(c-2-1)-(0.55,0)$);
\draw[cyan][->][color=black] ($(c-2-1)-(0.55,0)$) to[out=180,in=180] ($(c-3-1)-(0.55,0)$);
\draw[cyan][->][color=black] ($(c-3-1)-(0.55,0.1)$) to[out=180,in=180] ($(c-4-1)-(0.55,0)$);
}
where $\bm{0}$ denotes an $(n-1)$-dimensional zero vector. 
To obtain the matrix $B$, we begin by computing the discrete Fourier coefficients of $A(\bm{x})$, denoted as $\widetilde{A}$. Each row of $\widetilde{A}$ is  cyclically permuted to assemble the first-level block circulant matrix $B^{(1)}$. This matrix is then recursively permuted  to construct the $l$-level block circulant matrix $B^{(l)}$ ($2\leq l\leq n$), which consists of $N\times N$ blocks of $B^{(l-1)}$. As a result, $B$ is referred to as an \emph{$n$-level block circulant matrix} and can be  expressed as follows
$$
B=\sum_{\bm{k}\in K_N^n}\widetilde{A}_{\bm{k}}Z_{N}^{\bm{k}}=\sum_{ k_1\in K_N^1}\cdots\sum_{k_n\in K_N^1}\widetilde{A}_{(k_1,\cdots,  k_n)}Z_{N}^{k_1}\otimes\cdots\otimes Z_{N}^{k_n},\quad \bm{k}=(k_1,\cdots, k_n),
$$
where
\begin{equation}\nonumber
\begin{array}{cl}
Z_N^k=\begin{array}{c@{\hspace{-6pt}}l}
\left[
\begin{array}{cccccc}
0&\cdots&1&0&\cdots&0\\
\vdots&&&\ddots&&\vdots\\
\vdots&&&&\ddots&\vdots\\
1&0&&&&1\\
&\ddots&&&&\\
0&\cdots&1&\cdots&\cdots&0\\
\end{array}
\right]_{N\times N}&\begin{array}{l}
\left.\rule{0mm}{10mm}\right\}\overline{k}\\
\\
\\
\\
\end{array}
\end{array}
\\
\hspace{7mm}\begin{array}{l}
\underbrace{\rule{9mm}{0mm}}_{N-\overline{k}}\quad\quad\quad\quad\qquad\quad\ \qquad
\end{array}
\end{array}
\end{equation}
is the \emph{cycling permutation matrix} and  $\otimes$ denotes the tensor product.
 With this structure, we can apply the  compressed storage method proposed by Jiang et al. in \cite{jiang2024projection} to efficiently reduce the memory and  accelerate the computation.

 $\bullet$ \textbf{PM-BDF2-Step 2:} For $2\leq m\leq M$, the discrete variational formulation is to  find $u_N^m\in\overline{V}^N$ such that 
 \begin{equation}\label{eqn:full_discrete}
     (\mathcal{D}^{\tau}u_N^m, v)_N + (\alpha_N\nabla u_N^m, \nabla v)_N \approx (f_N^m, v)_N, \quad \forall v\in \overline{V}^N,
 \end{equation}
 where 
 $$\mathcal{D}^\tau u_N^m:=\frac{3u_N^m-4u_N^{m-1}+u_N^{m-2}}{2\tau}.$$
 Then  we can obtain the fully discrete scheme

 \begin{equation}\label{eqn:pm_bdf2_2_discrete}
    3\widetilde{U}_{\bm{k}_V}(t_m)\approx4\widetilde{U}_{\bm{k}_V}(t_{m-1})-\widetilde{U}_{\bm{k}_V}(t_{m-2}) -2\tau\sum_{\bm{k}_U\in K_N^n}\widetilde{A}_{\bm{k}_V\overset{N}{-}\bm{k}_U}(\bm{Pk}_V)^T(\bm{Pk}_U)\widetilde{U}_{\bm{k}_V}(t_m)+2\tau\widetilde{F}_{\bm{k}_V}(t_m),
\end{equation}
 which implies that
 \begin{equation}\label{eqn:pm_bdf2_2}
     (3+2\tau Q)\mathcal{U}^m\approx4\mathcal{U}^{m-1}-\mathcal{U}^{m-2}+2\tau\mathcal{F}^m,
 \end{equation}
 where
$$
\mathcal{U}^m(i)=\widetilde{U}_{\bm{k}_V}(t_m),\quad \mathcal{U}^m(i)=\widetilde{U}_{\bm{k}_V}(t_{m-1}),\quad \mathcal{U}^m(i)=\widetilde{U}_{\bm{k}_V}(t_{m-2}),\quad \mathcal{F}^m(i)=\widetilde{F}_{\bm{k}_V}(t_m).
$$

\noindent\textbf{Computational complexity analysis.}   In the implementation of PM-BDF2-Step 1 and 2,  the availability of FFT allows
 us to compute the discrete Fourier coefficients 
 requiring $\mathcal{O}(D\log D)$ operators. Moreover, in each step of implementation, there exists the matrix-vector multiplication with the dimensions of the matrix and vector being $D\times D$ and $D$, respectively. Using the compressed matrix-vector multiplication presented in \cite{jiang2024projection}, the computational complexity of  the matrix-vector multiplication reduces to $\mathcal{O}(g\times D)$, where $g$ denotes the non-zero discrete Fourier coefficients of $A$. In conclusion, the computational complexity of PM-BDF2 in solving QPE \eqref{eqn:parabolic} is $\mathcal{O}(g\times D)$.

\section{Convergence analysis}\label{sec:convergence_analysis}

In this section, we present the convergence analysis of PM-BDF2 to solve the QPE \eqref{eqn:parabolic}.
\subsection{Main theorem}
\begin{theorem}\label{theory:error_estimate_parabolic}
Let $s\geq 1$. Suppose  $u\in L^2(0,T;\overline{H}_{QP}^{1}(\mathbb{R}^d))$ and its parent function $U\in  L^2(0,T;\overline{H}^{s+1}(\mathbb{T}^n))$. Additionally,  assume that $f\in L^2(0,T;L^{2}_{QP}(\bbR^d))$, and $F\in L^2(0,T;H^{s}(\mathbb{T}^n))$.  If the following estimate holds:
\begin{equation}\label{eqn:regularity}
    \|F(t)\|_{H^s(\bbT^n)}\leq C\|U(t)\|_{H^{s}(\bbT^n)},
\end{equation}
then we have the convergence result
$$
\|u_N^m-u(\cdot, t_m)\|_0\leq C(N^{-s}+\tau^2),
$$
where $u_N^m$ is the solution at time $t_m=m\tau$ for the  fully discrete scheme, and $u(\cdot, t_m)$ is the exact solution at $t_m$.

% and
% \begin{equation}\label{eqn:stablity}
%     \|U(t)\|_{H^{s+1}(\bbT^n)}\leq C\|U^0\|_{H^{s+1}(\bbT^n)},
% \end{equation}
\end{theorem}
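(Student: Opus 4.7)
The plan is to decompose the total error via a spectral projection and handle the two pieces separately. Let $\Pi_N$ denote the Fourier truncation onto $V^N$ as in \eqref{eqn:truncation} and write
$$u_N^m-u(\cdot,t_m)=\eta^m+\xi^m,\qquad \eta^m:=u_N^m-\Pi_N u(\cdot,t_m),\qquad \xi^m:=\Pi_N u(\cdot,t_m)-u(\cdot,t_m).$$
The projection part $\xi^m$ is the standard spectral truncation residue of a quasiperiodic function. By the Fourier-Bohr-to-parent-Fourier correspondence recorded in Section \ref{sec:quasiperiodic_function} and the PM function-approximation theory developed in \cite{jiang2024numerical}, the hypothesis $U(\cdot,t_m)\in \overline{H}^{s+1}(\bbT^n)$ yields $\|\xi^m\|_0\leq C\,N^{-s}\,\|U(\cdot,t_m)\|_{H^{s+1}(\bbT^n)}$, which already supplies the $N^{-s}$ contribution to the claimed bound.

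For the discrete piece $\eta^m\in\overline{V}^N$, I would derive an error equation by inserting $\Pi_N u(\cdot,t_m)$ into the fully discrete BDF2 scheme \eqref{eqn:pm_bdf2_2_discrete} and subtracting. The resulting right-hand side is a consistency residual $R^m$ gathering three sources: (i) the BDF2 temporal truncation $\calD^\tau u(\cdot,t_m)-u_t(\cdot,t_m)$, which Taylor expansion gives as $O(\tau^2)$ once enough temporal regularity of $u$ is read off from the parabolic smoothing together with the source regularity and the estimate \eqref{eqn:regularity}; (ii) the aliasing defect between the continuous bilinear form $B[u,v;t_m]$ and its discrete analogue $(\alpha_N\nabla\Pi_N u,\nabla v)_N$, which in terms of Fourier-Bohr coefficients is the discrepancy between the truncated convolution $\sum_{\bk_U\in K_N^n}\widetilde{A}_{\bk_V\overset{N}{-}\bk_U}(\cdots)$ and the exact one; and (iii) the BDF1 starter, which determines the data $\eta^1$.

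Next I would run the discrete energy method. Testing the error equation against $\eta^m$ in $(\cdot,\cdot)_N$, which coincides with the $L^2_{QP}$ pairing on $V^N$ thanks to the orthogonality \eqref{eq:orth}, the uniform ellipticity \eqref{eqn:ellipticity} produces the dissipative term $\gamma_0\|\nabla\eta^m\|_0^2$, while $\calD^\tau$ is handled by the Nevanlinna--Odeh multiplier for BDF2 so that the time-differencing yields a telescoping positive quadratic quantity plus a controllable remainder. Summing from $m=2$ to the terminal index and invoking discrete Gr\"onwall gives
$$\max_m\|\eta^m\|_0^2+\tau\sum_{j=2}^m\|\nabla\eta^j\|_0^2\leq C\Bigl(\|\eta^0\|_0^2+\|\eta^1\|_0^2+\tau\sum_{j=1}^m\|R^j\|_0^2\Bigr),$$
so that $\|\eta^m\|_0=O(\tau^2+N^{-s})$ follows once each $\|R^j\|_0$ is bounded by the same quantity.

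I expect two items to demand real care. First, the startup: Step 1 of the scheme is only first-order consistent in time, so I must verify that, because exactly one such step is taken, the starter contribution is still $O(\tau^2+N^{-s})$ at the $\overline{V}^N$ level; this exploits the choice $u_N^0=\Pi_N u^0$ (giving $\eta^0=0$) together with the fact that a single first-order step injects only an $O(\tau^2)$ local error into an otherwise BDF2 recursion. Second, because the coefficient $\alpha(\bx)$ is genuinely quasiperiodic, the bilinear-form aliasing in (ii) cannot be analysed on a periodic cell of $\bbR^d$; I would instead lift the product $\alpha\,\nabla u$ to the parent torus $\bbT^n$ through the projection matrix $\bm{P}$, apply standard multiplier estimates in $H^s(\bbT^n)$ together with \eqref{eqn:regularity}, and pull the bound back to the quasiperiodic side via Parseval \eqref{eqn:parsevel}. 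This lifting of the variable-coefficient aliasing analysis from $\bbT^n$ to $\bbR^d$ is, I anticipate, the main technical obstacle.
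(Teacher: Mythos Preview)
Your proposal is correct, but it follows a genuinely different route from the paper. The paper does \emph{not} decompose through the spectral projection $\Pi_N u(\cdot,t_m)$. Instead it introduces an intermediate semi-discrete solution and splits the error into a purely spatial piece and a purely temporal piece: first, in Section~\ref{sec:convergence_semi} (Theorem~\ref{theorem:space}), it bounds $\|u(t)-u_N(t)\|_0\le CN^{-s}$ for the PM semi-discretization in space with time left continuous, using a continuous Gr\"onwall argument and Lemmas~\ref{lemma:truncation_interpolation}--\ref{lemma:nabla} applied to the parent function; then, in Section~\ref{sec:convergence_bdf2} (Lemma~\ref{lemma:time_solution_accuracy}), it bounds the BDF2 time error by $C_T\tau^2$ essentially by citing the classical result in Thom\'ee. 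The final proof is just a triangle inequality combining the two. Your approach, by contrast, works directly with the fully discrete error equation, invokes the Nevanlinna--Odeh multiplier for BDF2 stability, and runs a discrete Gr\"onwall argument; it also explicitly tracks the variable-coefficient aliasing and the BDF1 starter. What this buys you is a more self-contained and arguably more rigorous treatment of the coupling between spatial and temporal discretization, closer in spirit to the standard Galerkin analysis in Thom\'ee~\cite{thomee2007galerkin}; what the paper's decoupled approach buys is brevity, since each half can lean on existing results (the PM approximation theory of~\cite{jiang2024numerical,jiang2024projection} for the space part, and the textbook BDF2 estimate for the time part) without having to redo a discrete energy argument.
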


\noindent\textbf{Sketch of proof.}  We organize the proof of Theorem \ref{theory:error_estimate_parabolic} as follows. In Section \ref{sec:convergence_stability}, we establish the stability for $U(t)$. Then we give the error estimiate of the semi-discrete problem and  BDF2 scheme in Section \ref{sec:convergence_semi} and Section \ref{sec:convergence_bdf2}, respectively. Finally, we complete the proof of the main result in Section \ref{sec:convergence_proof}.

%$I$ is identity operator.

\subsection{Preliminary  lemmas}\label{sec:convergence_notions}
 Before proving the main conclusion, we  present some necessary  results. 
% \begin{lemma}[\cite{mercier1989introduction}]\label{lemma:truncation_periodic}
%  Let $r, s\in\bbR $ with $0\leq s\leq r$,  then we have
% $$
% \|U-P_NU\|_{H^s(\bbT^n)}\leq (1+N^2)^{\frac{s-r}{2}}|U|_{H^r(\bbT^n)}.
% $$
% \end{lemma}
The following lemma provides the truncation error estimate and interpolation error of the PM, as also presented in  \cite[Theorem 5.1]{jiang2024numerical} and \cite[Theorem 5.3]{jiang2024numerical}, respectively.
 \begin{lemma}\label{lemma:truncation_interpolation}
 Suppose that $u(\bm{x})\in\mathrm{QP}(\mathbb{R}^d)$ and its parent function $U(\bm{y})\in {H}^s(\mathbb{T}^n)$ with $s\geq0$. There exists a constant $C$, independent of $U$ and $N$, such that
\begin{align}
\label{eqn:error_truncation}
    \|T_Nu-u\|_{0}\leq CN^{-s}|{U}|_{{H}^s(\mathbb{T}^n)},
    \\
\label{eqn:error_interpolation}
     \|I_Nu-u\|_0\leq CN^{-s}|{U}|_{{H}^s(\mathbb{T}^n)}.
\end{align}
\end{lemma}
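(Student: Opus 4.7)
The plan is to reduce both estimates to classical periodic Fourier analysis on $\bbT^n$ via the identity $\hat{u}_{\bm{\lambda_k}}=\hat{U}_{\bm{k}}$ from the preceding lemma, combined with Parseval's identity \eqref{eqn:parsevel}. Because the columns of $\bm{P}$ are $\bbQ$-linearly independent, the frequencies $\{\bm{Pk}\}_{\bm{k}\in\bbZ^n}$ are pairwise distinct, so Parseval converts the $L^2_{QP}(\bbR^d)$-norm of any finite or convergent quasiperiodic trigonometric series into the $\ell^2$-norm of its coefficient sequence. Once this reduction is in place, both \eqref{eqn:error_truncation} and \eqref{eqn:error_interpolation} become standard spectral approximation questions on the torus.

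For the truncation estimate \eqref{eqn:error_truncation}, I would write
$$
T_Nu(\bm{x})-u(\bm{x})=-\sum_{\bm{k}\notin K_N^n}\hat{U}_{\bm{k}}\,e^{\imath(\bm{Pk})\cdot\bm{x}},
$$
apply Parseval to obtain $\|T_Nu-u\|_0^2=\sum_{\bm{k}\notin K_N^n}|\hat{U}_{\bm{k}}|^2$, and note that $\bm{k}\notin K_N^n$ forces $|k_l|\ge N/2$ for some coordinate $l$, hence $|\bm{k}|\ge N/2$. Inserting the trivial factor $(2|\bm{k}|/N)^{2s}\ge 1$ inside the tail sum and pulling $N^{-2s}$ out yields the bound $CN^{-2s}|U|_{H^s(\bbT^n)}^2$, which is \eqref{eqn:error_truncation}.

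For the interpolation estimate \eqref{eqn:error_interpolation}, the first step is to establish the \emph{aliasing formula}. Substituting the Fourier series of $U$ into the definition of $\widetilde{U}_{\bm{k}}$ and collapsing via the discrete orthogonality \eqref{eq:orth} produces
$$
\widetilde{U}_{\bm{k}}=\sum_{\bm{m}\in\bbZ^n}\hat{U}_{\bm{k}+N\bm{m}},\qquad \bm{k}\in K_N^n.
$$
This allows the decomposition
$$
I_Nu-u=\sum_{\bm{k}\in K_N^n}\Bigl(\sum_{\bm{m}\neq \bm{0}}\hat{U}_{\bm{k}+N\bm{m}}\Bigr)e^{\imath(\bm{Pk})\cdot\bm{x}}-\sum_{\bm{k}\notin K_N^n}\hat{U}_{\bm{k}}\,e^{\imath(\bm{Pk})\cdot\bm{x}},
$$
in which the second piece is a pure truncation and is controlled exactly as above.

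The main obstacle is the aliasing piece. I plan to apply Cauchy--Schwarz fibre-wise,
$$
\Bigl|\sum_{\bm{m}\neq \bm{0}}\hat{U}_{\bm{k}+N\bm{m}}\Bigr|^2\le \Bigl(\sum_{\bm{m}\neq\bm{0}}|\bm{k}+N\bm{m}|^{-2s}\Bigr)\Bigl(\sum_{\bm{m}\neq\bm{0}}|\bm{k}+N\bm{m}|^{2s}|\hat{U}_{\bm{k}+N\bm{m}}|^2\Bigr),
$$
and to exploit the elementary comparison $|\bm{k}+N\bm{m}|\ge (N/2)|\bm{m}|$, valid for $\bm{k}\in K_N^n$ and $\bm{m}\neq\bm{0}$, to dominate the first factor by $CN^{-2s}\sum_{\bm{m}\neq\bm{0}}|\bm{m}|^{-2s}$. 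The series converges under the typical assumption $s>n/2$; the borderline low-regularity range stated in the lemma requires a more delicate density/interpolation argument, for which I would invoke the detailed construction carried out in \cite{jiang2024numerical}. Summing the second factor over $\bm{k}\in K_N^n$ then gathers each high-mode Fourier coefficient of $U$ exactly once, producing $|U|_{H^s(\bbT^n)}^2$. Assembling both pieces yields \eqref{eqn:error_interpolation} and completes the proof.
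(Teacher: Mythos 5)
The paper does not actually prove this lemma itself---it quotes it from \cite[Theorems~5.1 and 5.3]{jiang2024numerical}---and your argument is precisely the standard one underlying those results: lift to the parent function on $\bbT^n$ via $\hat{u}_{\bm{\lambda_k}}=\hat{U}_{\bm{k}}$, invoke Parseval (using that $\bbQ$-linear independence of the columns of $\bm{P}$ makes the frequencies $\bm{Pk}$ pairwise distinct), and run the classical tail and aliasing estimates. Your truncation half is complete for all $s\ge 0$; for the interpolation half, the restriction $s>n/2$ you flag is real but is already needed for $I_N$ to be well defined on $H^s(\bbT^n)$ (point evaluations must make sense), so it reflects an imprecision in the lemma's stated hypothesis $s\ge 0$ rather than a gap in your argument.
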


We then estimate the truncation and interpolation errors of the gradient of quasiperiodic functions, as established in \cite[Proposition 3.3]{jiang2024projection}.  
\begin{lemma}\label{lemma:nabla}
Let $s\geq 1$. Then there exists a constant $C$ such that if $u(\bm{x})\in {H}^1_{QP}(\bbR^d)$ and parent function $U(\bm{y})\in {H}^{s+1}(\bbT^n)$, we have

\begin{equation}\label{eqn:nabla1}
    \|\alpha\nabla u - T_N(\alpha\nabla u)\|_{0}\leq CN^{-s}\|U\|_{H^{s+1}(\bbT^n)},
\end{equation}
\begin{equation}\label{eqn:nabla2}
    \|\alpha\nabla u - I_N(\alpha\nabla u)\|_{0}\leq CN^{-s}\|U\|_{H^{s+1}(\bbT^n)}.
\end{equation}

% \begin{proof}
% We will only prove the first line, the proof for the second line is similar. We set $g=\alpha\nabla u$. Based on chain rule, we have
% $$
% G(\bm{y})=\overline{\bP}A(\by)\widetilde{\nabla} U(\by),\quad \by=\bP^T\bx.
% $$
% Here, $\overline{\bP}=\operatorname{diag}\big(\sum_{i=1}^d p_{1i},\cdots,\sum_{i=1}^d p_{ni}\big)$. Moreover, we use  $\widetilde{\nabla}$ for the gradient in dimension $n$. From the Section \ref{lemma:truncation}, we have
% \begin{equation}\nonumber
%     \begin{aligned}
%         \|g-I_Ng\|_{0}&\leq C_2N^{-s}\|G\|_{s}\\
%         &\leq C_2N^{-s}\sup_{\bm{x}\in\bbR^d}|\alpha|\|\overline{\bP}\widetilde{\nabla} U\|_s\\
%         &\leq C_1 N^{-s}\max_{1\leq j\leq n}\|\bm{p}_j^T\|_1\|\widetilde{\nabla} U\|_s\\
%         &\leq CN^{-s}\|U\|_{s+1}.
%     \end{aligned}
% \end{equation}

% \end{proof}
\end{lemma}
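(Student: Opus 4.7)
The plan is to follow the standard spatial–temporal error splitting, exactly matching the four-part outline announced by the authors. I would first introduce an intermediate continuous-in-time semi-discrete solution $u_N(t)\in\overline{V}^N$ defined by
\begin{equation*}
(u_N'(t), v)_N + (\alpha_N \nabla u_N(t), \nabla v)_N = (f_N(t), v)_N, \quad \forall v\in\overline{V}^N, \qquad u_N(0) = I_N u^0,
\end{equation*}
and then split the error as
\begin{equation*}
u_N^m - u(\cdot,t_m) = \underbrace{(u_N^m - u_N(t_m))}_{\theta^m:\ \text{BDF2 time error}} + \underbrace{(u_N(t_m) - u(\cdot,t_m))}_{\rho(t_m):\ \text{spatial error}}.
\end{equation*}
The two pieces will be controlled by $\mathcal O(\tau^2)$ and $\mathcal O(N^{-s})$ bounds respectively, and combined by the triangle inequality.

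For the spatial part $\rho(t)$, I would introduce a Ritz-type projection $R_N:\overline{H}^1_{QP}(\bbR^d)\to\overline{V}^N$ associated with $B[\cdot,\cdot;t]$ and split $\rho(t)=(u_N(t)-R_Nu(\cdot,t))+(R_Nu(\cdot,t)-u(\cdot,t))$. The second piece is the standard Ritz error; by Céa's lemma it is dominated by the best approximation in $\overline V^N$, which is precisely controlled by the truncation/interpolation estimate \eqref{eqn:error_interpolation} of Lemma~\ref{lemma:truncation_interpolation} together with the gradient bound \eqref{eqn:nabla2} of Lemma~\ref{lemma:nabla}, giving $\mathcal O(N^{-s})$. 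The first piece satisfies a linear parabolic error ODE whose forcing consists of the truncation/interpolation residuals of $u$, $\alpha\nabla u$, and $f$; using the quasiperiodic ellipticity \eqref{eqn:ellipticity}, the Poincaré inequality from \cite{jiang2024projection} (as already applied in Theorem~\ref{lemma:energy_estimate}), and Gronwall's inequality, I obtain $\max_{0\le t\le T}\|\rho(t)\|_0\le CN^{-s}\|U\|_{L^2(0,T;H^{s+1}(\bbT^n))}$, where \eqref{eqn:regularity} is used to absorb the source contribution.

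For the BDF2 temporal error $\theta^m$, subtracting the fully discrete scheme \eqref{eqn:pm_bdf2_2_discrete} from the semi-discrete one evaluated at $t_m$ yields
\begin{equation*}
(\mathcal D^\tau \theta^m, v)_N + (\alpha_N\nabla\theta^m,\nabla v)_N = (\mathcal R^m, v)_N, \qquad \forall v\in\overline V^N,
\end{equation*}
with truncation residual $\mathcal R^m = u_N'(t_m) - \mathcal D^\tau u_N(t_m)$, whose Taylor expansion gives $\|\mathcal R^m\|_0\le C\tau^2\max_t\|u_N'''(t)\|_0$. Testing with $v=\theta^m$ and invoking the Dahlquist–Nevanlinna–Odeh G-stability identity
\begin{equation*}
4\tau(\mathcal D^\tau\theta^m,\theta^m)_N \ge \bigl(\|\theta^m\|_0^2+\|2\theta^m-\theta^{m-1}\|_0^2\bigr) - \bigl(\|\theta^{m-1}\|_0^2+\|2\theta^{m-1}-\theta^{m-2}\|_0^2\bigr),
\end{equation*}
combined with ellipticity and a discrete Gronwall inequality, delivers $\|\theta^m\|_0\le C\bigl(\tau^2+\|\theta^0\|_0+\|\theta^1\|_0\bigr)$. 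The startup value $\theta^1$ is produced by the BDF1 step of \textbf{PM-BDF2-Step 1}: although BDF1 is only first-order locally, a single step contributes $\tau\cdot\mathcal O(\tau)=\mathcal O(\tau^2)$ to the global error, which is consistent with the claimed rate.

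The main obstacle will be the BDF2 coercivity/stability step: the bilinear form $(\mathcal D^\tau\theta^m,\theta^m)_N$ is not manifestly positive, so the whole argument hinges on the G-stability identity above (equivalently, a $G$-norm involving the pair $(\theta^m,2\theta^m-\theta^{m-1})$) and on controlling $\|u_N'''\|_0$ by the data, which requires differentiating the semi-discrete equation twice in time and using \eqref{eqn:regularity} to bound the resulting source in $H^s(\bbT^n)$. Once these are in place, the triangle inequality $\|u_N^m-u(\cdot,t_m)\|_0\le \|\theta^m\|_0+\|\rho(t_m)\|_0$ yields exactly the $C(N^{-s}+\tau^2)$ bound.
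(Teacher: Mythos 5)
Your proposal does not address the statement you were asked to prove. Lemma~\ref{lemma:nabla} is a purely approximation-theoretic estimate: it bounds the truncation error $\|\alpha\nabla u - T_N(\alpha\nabla u)\|_0$ and the interpolation error $\|\alpha\nabla u - I_N(\alpha\nabla u)\|_0$ for the fixed quasiperiodic function $\alpha\nabla u$. It involves no time variable, no parabolic equation, no BDF2 scheme, and no error splitting between semi-discrete and fully discrete solutions. What you have sketched instead is a proof of the main convergence result, Theorem~\ref{theory:error_estimate_parabolic} (the $C(N^{-s}+\tau^2)$ bound), complete with Ritz projection, G-stability of BDF2, and discrete Gronwall arguments. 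Worse, your argument explicitly invokes ``the gradient bound \eqref{eqn:nabla2} of Lemma~\ref{lemma:nabla}'' as an ingredient, so as a proof of that lemma it would be circular.

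The actual content of Lemma~\ref{lemma:nabla} is much more elementary. The point is that $\alpha\nabla u$ is itself a quasiperiodic function whose parent function is $A\,\widetilde{\nabla}U$ on $\bbT^n$; since $U\in H^{s+1}(\bbT^n)$ and $A$ is bounded (and sufficiently regular), one has $\|A\,\widetilde{\nabla}U\|_{H^s(\bbT^n)}\leq C\|U\|_{H^{s+1}(\bbT^n)}$, and then both estimates follow by applying the generic truncation and interpolation bounds of Lemma~\ref{lemma:truncation_interpolation} to the quasiperiodic function $\alpha\nabla u$ with this parent function. The paper itself does not reprove this but cites Proposition~3.3 of \cite{jiang2024projection}. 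To repair your submission you would need to discard the parabolic machinery entirely and supply (or cite) this product-and-derivative regularity argument; the one nontrivial point to justify is why multiplication by $A$ preserves the $H^s$ regularity of $\widetilde{\nabla}U$ with the stated norm bound.
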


Next, We refer to Grönwall's inequality, which is commonly employed in the numerical analysis of parabolic equations.
\begin{lemma}[Grönwall's inequality]\label{lemma: Gronwall}
 Suppose that a differentable function $u$ satisfies the inequality
\begin{equation}\label{eqn:Gronwall}
    u'(t)\leq \sigma u(t)+w(t),
\end{equation}
then
$$
u(t)\leq u(0)e^{\sigma t}+\int_0^t w(\xi)e^{\sigma(t-\xi)}d\xi.
$$

% \begin{proof}
% We rewrite (\ref{eqn:Gronwall}) in the form
% $$
% \frac{d}{dt}\left(u(t)e^{-\sigma t}\right)\leq w(t)e^{-\sigma t},
% $$
% so intergrating between 0 and $t$ yields
% $$
% u(t)\leq e^{\sigma t}\left(u(0)+\int_0^t w(m)e^{-\sigma m}dm\right).
% $$
% \end{proof}
\end{lemma}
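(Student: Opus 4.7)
The plan is to prove Grönwall's inequality via the classical integrating-factor trick, transforming the differential inequality into an integrable form whose antiderivative yields the claimed bound directly.

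First I would multiply both sides of the hypothesis $u'(t)\leq \sigma u(t)+w(t)$ by the strictly positive factor $e^{-\sigma t}$, which preserves the inequality and produces $e^{-\sigma t}u'(t)-\sigma e^{-\sigma t}u(t)\leq e^{-\sigma t}w(t)$. The key observation is that the left-hand side is exactly the derivative $\frac{d}{dt}\bigl(e^{-\sigma t}u(t)\bigr)$, so the inequality collapses to
$$
\frac{d}{dt}\bigl(e^{-\sigma t}u(t)\bigr)\leq e^{-\sigma t}w(t).
$$
This rewriting is valid because $u$ is differentiable by hypothesis and $e^{-\sigma t}$ is smooth, so the product rule applies.

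Next I would integrate this scalar inequality over the interval $[0,t]$. Since integration of a real-valued inequality with respect to a positive measure preserves the inequality, we obtain
$$
e^{-\sigma t}u(t)-u(0)\leq \int_0^t e^{-\sigma \xi}w(\xi)\,d\xi.
$$
Rearranging and multiplying through by $e^{\sigma t}>0$ gives
$$
u(t)\leq u(0)e^{\sigma t}+\int_0^t e^{\sigma(t-\xi)}w(\xi)\,d\xi,
$$
which is precisely the claimed bound.

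There is essentially no major obstacle here: the proof reduces to recognizing the integrating factor and using that the antiderivative of a differential inequality can be obtained by ordinary Riemann integration. If one wants to be meticulous, the only subtlety worth flagging is the hypothesis on $w$ (implicit local integrability so that $\int_0^t e^{-\sigma\xi}w(\xi)\,d\xi$ is well defined) and on $\sigma$ (any real constant, the sign does not matter), but these are mild regularity assumptions that are standard in the parabolic setting where this lemma will be invoked.
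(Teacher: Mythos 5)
Your proof is correct: the integrating-factor argument is the canonical derivation of this differential form of Gr\"onwall's inequality, and each step (multiplying by $e^{-\sigma t}>0$, recognizing the exact derivative, integrating the scalar inequality, and rearranging) is valid. The paper itself states this lemma without proof, treating it as a classical fact, so there is no alternative argument to compare against; your write-up supplies exactly the standard proof one would expect, and your remark about the implicit integrability of $w$ is a reasonable (and harmless) point of care.
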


%\begin{lemma}
%The mapping $\varphi_{\bm{P}}$ is isometric in the sense of $L^2$.
%\begin{proof}
%By Parseval's equality, we have
%$$
%\|F\|_{L^2}=\sum_{\bm{k}\in\mathbb{Z}^n}|\hat{c}_{\bm{k}}^2|, \|f\|_{L^2}=\sum_{\bm{k}\in\mathbb{Z}^n}|\hat{c}_{\bm{k}}^2|.
%$$
%Thus,$\|f\|_{L^2}=\|\varphi_{\bm{P}}F\|_{L^2}=\|F\|_{L^2}$ and the mapping $\varphi_{\bm{P}}$ is isometric.
%\end{proof}
%\end{lemma}
For a given periodic function $U=\sum\limits_{\bm{k}\in\mathbb{Z}^n}\hat{U}_{\bm{k}}e^{\imath\bm{k}\cdot\bm{x}}$,  the $s$-order pseudo-differential operator $D^{s}:H^s(\mathbb{T}^n)\to L^2(\mathbb{T}^n)$ is defined by 
$$
D^s U := \sum_{\bm{k}\in\mathbb{Z}^n} (1+|\bm{k}|^2)^{\frac{s}{2}}\hat{U}_{\bm{k}}e^{\imath\bm{k}\cdot\bm{x}},
$$
and it satisfies the following identity:
$$
\|U\|_{H^s(\mathbb{T}^n)}=\|D^s U\|_{L^2(\mathbb{T}^n)}.
$$

To delve into the interactions between operators $D^s$ and $\mathcal{A}$, we introduce the following lemma, which is also established in \cite[pp.29]{alinhac2007pseudo}:

\begin{lemma}\label{lemma:commutator}
The order of  commutator operator $[D^s, \mathcal{A}]$ is  $s+1$. Furthermore, there exists a constant $C$ such that
$$
\|[D^s, \mathcal{A}] U\|_{L^2(\bbT^n)}\leq C\|U\|_{H^{s+1}(\bbT^n)},
$$
where $[D^s, \mathcal{A}]:= D^s\mathcal{A}-\mathcal{A}D^s$.
\end{lemma}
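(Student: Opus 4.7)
The plan is to verify the estimate directly on the Fourier side of $\bbT^n$, following the standard pseudo-differential calculus route of reducing the commutator with the second-order operator $\mathcal{A}$ to commutators of $D^s$ with smooth multiplication operators. Implicit in the statement is that $\mathcal{A}$ is a second-order differential operator with smooth periodic coefficients (in the paper's context, the parent of the elliptic operator $\mathcal{L}$, namely $\mathcal{A}U=-\mathrm{div}(A\nabla U)$), which I expand in Cartesian form as $\mathcal{A}=\sum_{|\beta|\le 2}a_\beta(\bm{y})\partial^\beta$ with $a_\beta\in C^\infty(\bbT^n)$. Since both $D^s$ and any $\partial^\beta$ are Fourier multipliers they commute, so $[D^s,\partial^\beta]=0$, and therefore
$$[D^s,\mathcal{A}]=\sum_{|\beta|\le 2}[D^s,a_\beta]\,\partial^\beta.$$
This pushes all of the analytic content onto the pure-multiplication commutators $[D^s,a]$.

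The core step is to show that for any $a\in C^\infty(\bbT^n)$ the operator $[D^s,a]$ maps $H^{s-1}(\bbT^n)$ into $L^2(\bbT^n)$ boundedly. Writing $a=\sum_{\bm{l}}\hat{a}_{\bm{l}}e^{\imath\bm{l}\cdot\bm{y}}$ and $U=\sum_{\bm{k}}\hat{U}_{\bm{k}}e^{\imath\bm{k}\cdot\bm{y}}$, a direct computation identifies the $\bm{m}$-th Fourier coefficient of $[D^s,a]U$ as
$$\sum_{\bm{l}\in\bbZ^n}\Bigl((1+|\bm{m}|^2)^{s/2}-(1+|\bm{m}-\bm{l}|^2)^{s/2}\Bigr)\hat{a}_{\bm{l}}\,\hat{U}_{\bm{m}-\bm{l}}.$$
The mean value theorem applied to $\xi\mapsto(1+|\xi|^2)^{s/2}$ bounds the symbol difference by $C|\bm{l}|\,(1+|\bm{m}|+|\bm{m}-\bm{l}|)^{s-1}$. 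After extracting the weight $(1+|\bm{m}-\bm{l}|^2)^{(s-1)/2}$ to absorb the $H^{s-1}$-norm of $U$, what remains is essentially $|\bm{l}|\,\hat{a}_{\bm{l}}$ times a bounded symmetric factor, and Schur's test (equivalently Young's convolution inequality on $\ell^2$) yields the bound, using that smoothness of $a$ makes $|\bm{l}|\,\hat{a}_{\bm{l}}$ decay faster than any polynomial.

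Putting the two ingredients together, each summand $[D^s,a_\beta]\partial^\beta$ is the composition of a differential operator of order $|\beta|\le 2$ with a commutator of order $s-1$, so its total order is $s+1$ and
$$\bigl\|[D^s,a_\beta]\partial^\beta U\bigr\|_{L^2(\bbT^n)}\le C\bigl\|\partial^\beta U\bigr\|_{H^{s-1}(\bbT^n)}\le C\|U\|_{H^{s+1}(\bbT^n)}.$$
Summing over the finitely many multi-indices $|\beta|\le 2$ yields the claimed estimate and simultaneously shows that $[D^s,\mathcal{A}]$ is of order $s+1$.

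The main technical obstacle is the kernel bound in the middle step. In the diagonal regime $|\bm{l}|\le\tfrac12|\bm{m}|$ the factor $(1+|\bm{m}|+|\bm{m}-\bm{l}|)^{s-1}$ is genuinely comparable to $(1+|\bm{m}-\bm{l}|^2)^{(s-1)/2}$ and the gain of one derivative comes from the explicit $|\bm{l}|$ factor; in the off-diagonal regime $|\bm{l}|\gtrsim|\bm{m}|$ the mean value estimate is weaker, but the rapid decay of $\hat{a}_{\bm{l}}$ supplies any polynomial gain in $|\bm{l}|$ needed to close the Schur estimate. Handling this case split carefully is the only nontrivial piece of the argument; the rest is routine symbol-calculus bookkeeping.
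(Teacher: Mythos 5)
Your argument is correct, but it is a genuinely different route from the paper's: the paper gives no proof at all for this lemma and simply invokes the general symbolic calculus of pseudo-differential operators from Alinhac--G\'erard (composition of an operator of order $s$ with one of order $2$ has a commutator of order $s+2-1=s+1$, by the standard commutator theorem for classical symbols). You instead build a self-contained, elementary proof: writing $\mathcal{A}=\sum_{|\beta|\le 2}a_\beta\partial^\beta$, using that $D^s$ and $\partial^\beta$ are both Fourier multipliers so only the multiplication commutators $[D^s,a_\beta]$ survive, and then establishing the one-derivative gain $\|[D^s,a]V\|_{L^2(\bbT^n)}\le C\|V\|_{H^{s-1}(\bbT^n)}$ by the mean value theorem on the symbol $(1+|\xi|^2)^{s/2}$ together with a Schur/Young estimate on the resulting convolution kernel. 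The computation of the Fourier coefficient of $[D^s,a]U$, the diagonal/off-diagonal case split, and the final composition bound $\|[D^s,a_\beta]\partial^\beta U\|_{L^2(\bbT^n)}\le C\|\partial^\beta U\|_{H^{s-1}(\bbT^n)}\le C\|U\|_{H^{s+1}(\bbT^n)}$ are all sound. What the paper's citation buys is brevity and generality (it covers arbitrary classical symbols); what your argument buys is transparency and independence from the pseudo-differential machinery, at the cost of the kernel bookkeeping you correctly identify as the only delicate step.

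One caveat you should make explicit: your Schur estimate needs the coefficients $a_\beta$ (hence the parent coefficient $A$) to be smooth enough that $|\bm{l}|\,(1+|\bm{l}|)^{|s-1|}\,|\hat{a}_{\bm{l}}|$ is summable, e.g.\ $A\in C^\infty(\bbT^n)$ as you assume. The paper's standing hypothesis is only $\alpha\in L^{\infty}_{QP}(\bbR^d)$, which does not suffice; the cited reference likewise requires smooth symbols, so your assumption matches the intended setting, but it is an additional regularity requirement on $A$ that neither you nor the paper states as a hypothesis of the lemma.
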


% % Next, we proof some theorems. We first confine attention to establishing the stability for $U(t)$.

\subsection{Priori estimate for $U(t)$ }\label{sec:convergence_stability}
In this subsection, we we aim to demonstrate the solution 
$U(t)$ of \eqref{eqn:parabolic2} is stable with respect to  initial condition. $U(t)$  is the solution to the corresponding $n$-dimensional periodic auxiliary system derived from  \eqref{eqn:parabolic}, and is defined as 
\begin{equation}\label{eqn:parabolic2}
\left\{\begin{aligned}
     &\frac{\partial U(\bm{y}, t)}{\partial{t}}+\mathcal{A}U(\bm{y}, t)=F(\bm{y}, t), \quad (\bm{y}, t)\in\mathbb{T}^n\times (0,T)\\
     & U(\bm{y}, 0)=U^0(\bm{y}).
\end{aligned}
\right.
\end{equation}
Here, 
$$\mathcal{A}:=-\widetilde{\nabla}\cdot(A\widetilde{\nabla})=-\left(\sum_{i=1}^d\sum_{j,l=1}^n p_{ij}p_{il}\frac{\partial A}{\partial y_{j}}\frac{\partial}{\partial y_l}\right)-A\left(\sum_{i=1}^d\sum_{j,l=1}^n p_{ij}p_{il}\frac{\partial^2}{\partial y_{j}\partial y_l}\right)
$$ 
is a second order bounded operator in $n$-dimensional space and directional derivative $\widetilde{\nabla}$ is given by
$$\widetilde{\nabla}=\left(\sum\limits_{i=1}^d p_{i1}\frac{\partial}{\partial y_{1}}, \cdots, \sum\limits_{i=1}^d p_{in}\frac{\partial}{\partial y_{n}}\right).$$ 

 Building on the periodic auxiliary system \eqref{eqn:parabolic2} and Lemma \ref{lemma:commutator},  we provide the following priori estimate for the  solution $U(t)$.
\begin{theorem}\label{theorem:stability}
Let $T>0$ and $s\geq 0$ given. Under the assumptions of Theorem \ref{theory:error_estimate_parabolic},  the following inequality holds for all $t\in(0,T)$:
$$
\|U(t)\|_{H^{s}(\bbT^n)}\leq C\|U^0\|_{H^{s}(\bbT^n)}.
$$

\begin{proof}
\begin{equation*}
    \begin{aligned}
    \frac{d}{dt}\|U(t)\|_{H^s(\mathbb{T}^n)}^2 &=\frac{d}{dt}\|D^s U(t)\|_{L^2(\bbT^n)}^2=\left( D^s \frac{\partial U}{\partial t }, D^s U\right)+\left(D^s U, D^s \frac{\partial U}{\partial t }\right)\\
    &=-\left(D^s \mathcal{A}U, D^s U\right)-\left( D^s U, D^s \mathcal{A}U\right)+(D^sF, D^sU)+(D^sU, D^sF)\\
    &=-2(\mathcal{A}D^s U, D^s U)-([D^s, \mathcal{A}]U, D^s U)- (D^s U, [D^s,\mathcal{A}]U)+2(D^sF, D^sU).
   \end{aligned}
 \end{equation*}
Recall that $D^s$ is a pseudo-differential operator of order $s$. We can then combine Lemma \ref{lemma:commutator} with the fact $U\in\overline{H}^{s+1}(\mathbb{T}^n)$ to obtain that
$$
\|[D^s, \mathcal{A}]U\|_{L^2(\bbT^n)}\leq C_1\|U\|_{H^{s+1}(\bbT^n)}=C_1\|D^sU\|_{H^1(\bbT^n)}\leq C_2\|D^sU\|_{L^2(\bbT^n)}.
$$
Now, combining this with the boundedness of $\mathcal{A}$, we have
$$
\frac{d}{dt}\|U(t)\|_{H^{s}(\bbT^n)}^2\leq 2\gamma_1\|D^sU\|_{L^{2}(\bbT^n)}^2 + 2C_2\|D^sU\|^2_{L^{2}(\bbT^n)}+2\|D^s U\|_{L^2(\bbT^n)}\|D^sF\|_{L^2(\bbT^n)}.
$$
Furthermore, using the assumption \eqref{eqn:regularity}, we arrive at
% $$
%  \frac{d}{dt}\|U(t)\|_s^2\leq C_2\|U(t)\|_s^2 + (F(t), D^s U(t)) + (D^s U(t),F(t))\leq  C_2\|U(t)\|_s^2 + C_3\|U(t)\|_s^2\leq C\|U(t)\|_s^2.
%  $$
$$
\frac{d}{dt}\|U(t)\|_{H^{s}(\bbT^n)}^2\leq C_3\|U(t)\|_{H^{s}(\bbT^n)}^2.
$$
 By applying Grönwall's inequality \eqref{eqn:Gronwall}, we obtain
$$
\|U(t)\|^2_{H^{s}(\bbT^n)}\leq C\|U^0\|_{H^{s}(\bbT^n)}^2, \quad\forall t\in(0,T),
$$
where the constant $C=e^{C_3T}$.

 \end{proof}
 \end{theorem}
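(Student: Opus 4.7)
The plan is to carry out an $H^s$-energy estimate on the auxiliary periodic system \eqref{eqn:parabolic2}. Since $\|U(t)\|_{H^s(\bbT^n)}^2 = \|D^s U(t)\|_{L^2(\bbT^n)}^2$ by the definition of the pseudodifferential operator $D^s$, I would differentiate in time and substitute $U_t = -\mathcal{A}U + F$ into the inner product to obtain
$$\frac{d}{dt}\|D^s U(t)\|_{L^2}^2 = -2\,\mathrm{Re}\,(D^s\mathcal{A}U,\, D^s U)_{L^2} + 2\,\mathrm{Re}\,(D^s F,\, D^s U)_{L^2}.$$
This reduces the task to controlling the principal term $(D^s\mathcal{A}U, D^s U)_{L^2}$ and the forcing term $(D^s F, D^s U)_{L^2}$ by some quantity of order $\|U\|_{H^s}^2$.

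The next step is to swap $D^s$ past $\mathcal{A}$ using the decomposition $D^s\mathcal{A} = \mathcal{A}D^s + [D^s,\mathcal{A}]$, which separates a ``commutes-with-the-symbol'' piece $(\mathcal{A}D^s U, D^s U)_{L^2}$ from a commutator remainder. For the forcing term, Cauchy-Schwarz combined with the regularity hypothesis \eqref{eqn:regularity} gives $|(D^s F, D^s U)_{L^2}| \leq \|F\|_{H^s}\|U\|_{H^s} \leq C\|U\|_{H^s}^2$. Lemma \ref{lemma:commutator} supplies the bound $\|[D^s,\mathcal{A}]U\|_{L^2} \leq C\|U\|_{H^{s+1}(\bbT^n)}$ for the commutator. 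Once all three pieces are assembled into a differential inequality of the form $\tfrac{d}{dt}\|U\|_{H^s}^2 \leq C\|U\|_{H^s}^2$, Grönwall's inequality (Lemma \ref{lemma: Gronwall}) immediately yields $\|U(t)\|_{H^s}^2 \leq e^{CT}\|U^0\|_{H^s}^2$, as required.

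The main obstacle is the mismatch of derivative order in the commutator estimate: Lemma \ref{lemma:commutator} bounds $\|[D^s,\mathcal{A}]U\|_{L^2}$ by $\|U\|_{H^{s+1}}$, which is one full derivative stronger than the $H^s$-norm we aim to control, and so it cannot be fed directly into Grönwall. The way I would close this gap is to exploit the dissipation coming from the principal term: by the uniform ellipticity \eqref{eqn:ellipticity} and integration by parts in the periodic setting, $(\mathcal{A}D^s U, D^s U)_{L^2}$ is nonnegative and in fact dominates $\gamma_0\|\widetilde\nabla D^s U\|_{L^2}^2$, which together with $\|D^s U\|_{L^2}^2$ controls the full $H^{s+1}$-norm of $U$. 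Applying the weighted Young inequality $|ab|\leq \tfrac{\varepsilon}{2}a^2 + \tfrac{1}{2\varepsilon}b^2$ to $|([D^s,\mathcal{A}]U, D^sU)_{L^2}|\leq C\|U\|_{H^{s+1}}\|D^sU\|_{L^2}$ and choosing $\varepsilon$ small enough lets the $\varepsilon\|U\|_{H^{s+1}}^2$ piece be absorbed by $-2(\mathcal{A}D^sU, D^sU)_{L^2}$, leaving only an $O(\|U\|_{H^s}^2)$ contribution on the right-hand side, which is exactly the form Grönwall requires.
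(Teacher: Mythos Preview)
Your overall plan---differentiate $\|D^sU\|_{L^2}^2$, substitute the equation, split off the commutator $[D^s,\mathcal{A}]$, use \eqref{eqn:regularity} on the forcing, and finish with Gr\"onwall---is exactly the paper's route. The divergence is in how the order mismatch from Lemma~\ref{lemma:commutator} is closed, and here your absorption argument has a genuine gap. You claim that $\gamma_0\|\widetilde\nabla D^sU\|_{L^2}^2$ together with $\|D^sU\|_{L^2}^2$ controls the full $\|U\|_{H^{s+1}(\bbT^n)}^2$, but this fails whenever $d<n$: the lifted operator $\mathcal{A}=-\widetilde\nabla\cdot(A\widetilde\nabla)$ is only \emph{degenerately} elliptic on $\bbT^n$, since $\widetilde\nabla$ picks out just the $d$ directions determined by $\bm P^T$. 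In Fourier variables $|\widetilde\nabla e^{\imath\bm k\cdot\bm y}|=|\bm P\bm k|$, and with $\bbQ$-independent columns there are arbitrarily large $\bm k\in\bbZ^n$ for which $|\bm P\bm k|$ is arbitrarily small, so no inequality of the form $\|\widetilde\nabla V\|_{L^2}^2+\|V\|_{L^2}^2\gtrsim\|V\|_{H^1}^2$ can hold. Your $\varepsilon\|U\|_{H^{s+1}}^2$ term therefore cannot be absorbed as written.

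The paper does not attempt absorption at all: after Lemma~\ref{lemma:commutator} it writes $\|U\|_{H^{s+1}}=\|D^sU\|_{H^1}\leq C_2\|D^sU\|_{L^2}$ and bounds $-2(\mathcal{A}D^sU,D^sU)\leq 2\gamma_1\|D^sU\|_{L^2}^2$ directly from ``boundedness of $\mathcal{A}$'', reaching the Gr\"onwall-ready inequality without ever carrying an $H^{s+1}$ term on the right. If you want to salvage an absorption-style proof, the fix is to use the structure of the commutator rather than the raw estimate of Lemma~\ref{lemma:commutator}: since $D^s$ is a Fourier multiplier it commutes with $\widetilde\nabla$, so $[D^s,\mathcal{A}]U=-\widetilde\nabla\cdot\bigl([D^s,A]\widetilde\nabla U\bigr)$, and after one integration by parts
\[
\bigl([D^s,\mathcal{A}]U,\,D^sU\bigr)_{L^2}=\bigl([D^s,A]\widetilde\nabla U,\,\widetilde\nabla D^sU\bigr)_{L^2}\lesssim\|U\|_{H^s}\,\|\widetilde\nabla D^sU\|_{L^2},
\]
using that $[D^s,A]$ has order $s-1$. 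This right-hand side \emph{does} match the degenerate dissipation $\gamma_0\|\widetilde\nabla D^sU\|_{L^2}^2$ and can be absorbed after Young's inequality, yielding the desired $\frac{d}{dt}\|U\|_{H^s}^2\leq C\|U\|_{H^s}^2$.
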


\subsection{Error estimate for the semi-discrete problem}\label{sec:convergence_semi}
In this subsection, we focus on the error estimate for semi-discrete problem of QPE \eqref{eqn:parabolic}. Specifically, for each fixed $t>0$, we  seek  $u_N(t)\in \overline{V}^{N}$ such that it satisfies
\begin{equation}\label{eqn:parabolic_semi}
   \left\{ \begin{aligned}
        &u_N +\mathcal{L}_Nu_N=f_N\\
        &u_N^0=u(\cdot, 0),
    \end{aligned}
    \right.
\end{equation}
where the  operator $\mathcal{L}_N$  is defined by
$$
\mathcal{L}_Nu:=-\mathrm{div}\left(I_N(\alpha\nabla u)\right).
$$
With the preparations in Section \ref{sec:convergence_notions} and Section \ref{sec:convergence_stability}, we now establish the error estimate for problem \eqref{eqn:parabolic_semi} as follows.
% With Lemma \ref{lemma:nabla}, Lemma \ref{lemma: Gronwall} and Lemma \ref{theorem:stability}, we then give the 

\begin{theorem}\label{theorem:space}
Let $s\geq0$ and $T> 0$. If $u\in \overline{H}^1_{QP}(\mathbb{R}^d)$, with its parent function $U\in \overline{H}^{s+1}(\mathbb{T}^n)$, and the regularity assumption \eqref{eqn:regularity} holds, we have error estimate
$$
\|u(t)-u_N(t)\|_0\leq C N^{-s}\|U^0\|_{H^{s+1}(\bbT^n)}
$$
for all $t\in [0,T]$.
\end{theorem}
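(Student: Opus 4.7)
The plan is to use the standard error decomposition for spectral Galerkin discretizations of parabolic problems, combined with the truncation/interpolation estimates from Section \ref{sec:convergence_notions} and the a priori bound of Theorem \ref{theorem:stability}. Split
$$u(t)-u_N(t) = \rho(t) + \theta(t), \qquad \rho(t):=u(t)-T_N u(t), \quad \theta(t):=T_N u(t)-u_N(t)\in\overline{V}^N.$$
The truncation part is controlled directly: Lemma \ref{lemma:truncation_interpolation} gives $\|\rho(t)\|_0\le CN^{-s}|U(t)|_{H^s(\bbT^n)}$, and the a priori stability estimate of Theorem \ref{theorem:stability}, applied at regularity level $s$ and then bounded by the $H^{s+1}$-norm through monotonicity of Sobolev norms, yields $\|\rho(t)\|_0\le CN^{-s}\|U^0\|_{H^{s+1}(\bbT^n)}$.

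The core of the argument is the bound on $\theta$. First I would derive its evolution equation by testing the variational formulation of \eqref{eqn:parabolic} against $v\in\overline{V}^N\subset\overline{H}^1_{QP}(\bbR^d)$, using the $L^2_{QP}$-orthogonality $(\partial_t u,v)=(\partial_t T_N u,v)$, and subtracting the semi-discrete weak form of \eqref{eqn:parabolic_semi}. After inserting $\pm\,I_N(\alpha\nabla u)$ into the bilinear form, I obtain
\begin{equation*}
(\partial_t\theta,v)+\bigl(I_N(\alpha\nabla\theta),\nabla v\bigr)=(f-I_N f,v)-\bigl(\alpha\nabla u-I_N(\alpha\nabla u),\nabla v\bigr)-\bigl(I_N(\alpha\nabla\rho),\nabla v\bigr).
\end{equation*}
Choosing $v=\theta$, I rewrite the left-hand bilinear term as
\begin{equation*}
\bigl(I_N(\alpha\nabla\theta),\nabla\theta\bigr)=(\alpha\nabla\theta,\nabla\theta)-\bigl(\alpha\nabla\theta-I_N(\alpha\nabla\theta),\nabla\theta\bigr),
\end{equation*}
apply ellipticity \eqref{eqn:ellipticity} to the first piece and Lemma \ref{lemma:nabla} to the aliasing remainder, and use Cauchy--Schwarz and Young's inequality to absorb every $\nabla\theta$-factor into the coercive term $\gamma_0\|\nabla\theta\|_0^2$. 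Lemma \ref{lemma:nabla} dominates $\|\alpha\nabla u-I_N(\alpha\nabla u)\|_0$ and $\|\alpha\nabla\rho\|_0$ by $CN^{-s}\|U(t)\|_{H^{s+1}(\bbT^n)}$, while Lemma \ref{lemma:truncation_interpolation} combined with the regularity hypothesis \eqref{eqn:regularity} gives $\|f-I_N f\|_0\le CN^{-s}\|U(t)\|_{H^{s+1}(\bbT^n)}$.

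Collecting these bounds produces the differential inequality
\begin{equation*}
\tfrac{d}{dt}\|\theta\|_0^2 \le C\|\theta\|_0^2 + CN^{-2s}\|U(t)\|_{H^{s+1}(\bbT^n)}^2,
\end{equation*}
to which I apply Grönwall's inequality (Lemma \ref{lemma: Gronwall}); a second appeal to Theorem \ref{theorem:stability} at order $s+1$ replaces $\|U(t)\|_{H^{s+1}(\bbT^n)}$ by $\|U^0\|_{H^{s+1}(\bbT^n)}$, and since $\theta(0)=0$ by choice of discrete initial data the resulting bound is $\|\theta(t)\|_0\le CN^{-s}\|U^0\|_{H^{s+1}(\bbT^n)}$ uniformly on $[0,T]$. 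The triangle inequality on the decomposition then closes the argument. The main obstacle I anticipate is the treatment of the discrete bilinear form $(I_N(\alpha\nabla\cdot),\nabla\cdot)$: unlike the $L^2$-projection $T_N$, the interpolation $I_N$ does not inherit coercivity for free, and the delicate step is verifying that the aliasing perturbation $(\alpha\nabla\theta-I_N(\alpha\nabla\theta),\nabla\theta)$ is of order $N^{-s}$ so that it can either be absorbed into $\gamma_0\|\nabla\theta\|_0^2$ via Young's inequality or fed into the Grönwall argument without degrading the spectral rate.
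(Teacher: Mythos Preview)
Your proposal follows essentially the same route as the paper: the identical decomposition $u-u_N=(u-T_Nu)+(T_Nu-u_N)$, an evolution equation for the second piece tested against itself, Young's inequality to absorb the $\nabla\theta$ terms, and Gr\"onwall combined with Lemmas~\ref{lemma:truncation_interpolation}--\ref{lemma:nabla} and Theorem~\ref{theorem:stability}. Two minor differences are worth noting. First, your use of the $L^2_{QP}$-orthogonality $(\partial_t u,v)=(\partial_t T_Nu,v)$ for $v\in\overline{V}^N$ eliminates the term $(\partial_t\rho,\theta)$; the paper instead keeps this contribution (their $Z_2$) and bounds $\|\partial_t\psi\|_0$ by passing through the parent equation $\partial_t U=-\mathcal{A}U+F$ together with the boundedness of $\mathcal{A}$ and hypothesis~\eqref{eqn:regularity}---your route is cleaner here. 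Second, the obstacle you anticipate, namely coercivity of the discrete form $(I_N(\alpha\nabla\cdot),\nabla\cdot)$, is simply \emph{assumed} in the paper: it writes ``combining this with the ellipticity of $\mathcal{L}_N$'' and proceeds directly to $(\mathcal{L}_N w_N,w_N)\ge\gamma_0\|\nabla w_N\|_0^2$ without justification. Be aware that your proposed fix via Lemma~\ref{lemma:nabla} would not close the gap, since that lemma bounds the aliasing remainder by $CN^{-s}\|\Theta\|_{H^{s+1}(\bbT^n)}$, and for $\theta\in\overline{V}^N$ an inverse inequality makes this of order $N\|\theta\|_0$ rather than $N^{-s}$; the paper's resolution is simply to take the discrete ellipticity for granted.
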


\begin{proof}
Since
\begin{align*}
  \|u(t)-u_N(t)\|_0&\leq \|u(t)-u_T( t)\|_0+\|u_T( t)- u_N(t)\|_0\\
  &:=\|\psi(t)\|_0+\|w_N(t)\|_0.  
\end{align*}
To proceed, we will estimate the bounds for $\|\psi(t)\|_0$ and $\|w_N(t)\|_0$, respectively.

First, by applying Lemma \ref{lemma:truncation_interpolation}, we have
$$
\|\psi(t)\|_0=\int_0^t \|u(\xi)-u_T( \xi)\|_0d\xi\leq C N^{-s}\|U(t)\|_{H^{s}(\mathbb{T}^n)}.
$$
Next, we  focus on the bounds for $\|w_N\|_0$. We rewrite \eqref{eqn:parabolic_semi} as follows
$$
\frac{\partial w_N}{\partial t}+\mathcal{L}_Nw_N=(\mathcal{L}_N-\mathcal{L})u_T + \frac{\partial \psi}{\partial t}+\mathcal{L}\psi+f-I_Nf.
$$
 Multipilying this relationship  by $w_N$ in terms of inner product $(\cdot, \cdot)$, we find that
$$
\left(\frac{\partial w_N}{\partial t},w_N\right)+\left(\mathcal{L}_Nw_N, w_N\right)= \left((\mathcal{L}_N-\mathcal{L})u_T,w_N\right)+\left(\frac{\partial \psi}{\partial t},w_N\right)+\left(\mathcal{L}\psi,w_N\right) +(f-I_Nf,w_N). 
$$
Combining this with the ellipicity of $\mathcal{L}_N$, we have
$$
\frac{1}{2}\frac{d}{dt}\|w_N\|_0^2+\gamma_0\|\nabla w_N\|_0^2\leq Z_1+Z_2+Z_3+Z_4,
$$
where 
$$
Z_1=\left((\mathcal{L}_N-\mathcal{L})u_T,w_N\right),
$$
$$
Z_2=\left(\frac{\partial \psi}{\partial t},w_N\right),
$$
$$
Z_3=\left(\mathcal{L}\psi,w_N\right),
$$
$$
 Z_4=(f-I_Nf, w_N).
 $$

(i) We first establish an upper bound for $Z_1$:
\begin{equation*}
	\begin{aligned}
		\left((\mathcal{L}_N-\mathcal{L})u_T,w_N\right)&=\left(-\mathrm{div}(I_N(\alpha\nabla u_T)),w_N\right)-\left(-\mathrm{div}(\alpha\nabla u_T),w_N\right)\\
		&=\left(I_N(\alpha\nabla u_T)-\alpha\nabla u_T,\nabla w_N\right)\\
		&\leq\frac{1}{2\gamma_0}\|I_N(\alpha\nabla u_T)-\alpha\nabla u_T\|_0^2+\frac{\gamma_0}{2}\|\nabla w_N\|_0^2\\
		&\leq\frac{1}{2\gamma_0}(\int_0^t\|I_N(\alpha\nabla u_T(\xi))-\alpha\nabla u_T(\xi)\|_0d\xi)^2+\frac{\gamma_0}{2}\|\nabla w_N\|_0^2\\
		&\leq CN^{-2s}\|U(t)\|_{H^{s+1}(\bbT^n)}^2+\frac{\gamma_0}{2}\|\nabla w_N\|_0^2.
	\end{aligned}
\end{equation*}

(ii) For $Z_2$, we have
\begin{equation*}
		Z_2=\left(\frac{\partial \psi}{\partial t},w_N\right)\leq \frac{1}{2\theta}\|\frac{\partial \psi}{\partial t}\|_0^2+\frac{\theta}{2}\|w_N\|_0^{2},
\end{equation*}
with
$$
\|\frac{\partial \psi}{\partial t}\|_0^2=\|\frac{\partial u}{\partial t}-T_N\Big(\frac{\partial u}{\partial t}\Big)\|_0^2\leq CN^{-2s}\|\frac{\partial U}{\partial t}\|_{H^{s}(\mathbb{T}^n)}^2.
$$
Furthermore, using \eqref{eqn:parabolic2} and the boundedness of $\mathcal{A}$,  we conclude that
$$
\|\frac{\partial U}{\partial t}\|_{H^s(\mathbb{T}^n)}\leq\|\mathcal{A}U\|_{H^s(\mathbb{T}^n)}+\|F\|_{H^s(\mathbb{T}^n)}\leq C\|U(t)\|_{H^{s}(\mathbb{T}^n)}+\|F(t)\|_{H^s(\mathbb{T}^n)},
$$
leading to
$$
Z_2\leq CN^{-2s}\|U(t)\|_{H^{s+1}(\mathbb{T}^n)}^2+\frac{\theta}{2}\|w_N\|_0^{2},
$$
where we use the assumption \eqref{eqn:regularity}.

(iii) Moving into $Z_3$, we have
\begin{equation}
		Z_3=\left(\mathcal{L}\psi,w_N\right)=\left(\alpha\nabla \psi,\nabla w_N\right).
\end{equation}
Applying the Cauchy-Schwarz inequality, we obtain
\begin{equation}
    Z_3\leq\frac{1}{2\gamma_0}\|\alpha\nabla \psi\|_0^2+\frac{\gamma_0}{2}\|\nabla w_N\|_0^2,
\end{equation}
where we have applied Lemma \ref{lemma:nabla}. Consequently,  it follows that
\begin{equation}\label{eqn:Z_3_2}
    \|\alpha\nabla \psi\|_0^2\leq CN^{-2s}\|U\|^2_{H^{s+1}(\mathbb{T}^n)}.
\end{equation}
Substituting \eqref{eqn:Z_3_2} into the expression for $Z_3$, we arrive at
$$
Z_3\leq CN^{-2s}\|U(t)\|_{H^{s+1}(\mathbb{T}^n)}^2+\frac{\gamma_0}{2}\|\nabla w_N\|_0^2.
$$

(iv) Upper bound estimate for $Z_4$ holds 
 \begin{equation*}
     \begin{aligned}
       Z_4&\leq \|f-I_Nf\|_0^2+\|w_N\|_0^2\\
       &=(\int_0^t\|f(\xi)-I_Nf(\xi)\|_0d\xi)^2+\|w_N\|_0^2\\
       &\leq CN^{-2s}|F|_{H^{s}(\mathbb{T}^n)}^2+\|w_N\|_0^2\\
       &\leq  CN^{-2s}\|F\|_{H^{s}(\mathbb{T}^n)}^2+\|w_N\|_0^2.
     \end{aligned}
 \end{equation*}
 
Gathering the terms $Z_1,Z_2,Z_3, Z_4$ we find: 
\begin{equation*}
     \begin{aligned}
      \frac{1}{2}\frac{d}{dt}\|w_N\|_0^2 &\leq Z_1+Z_2+Z_3+Z_4\\
      &\leq (\frac{\theta}{2}+1)\|w_N\|_0^2+CN^{-2s}\|U\|_{H^{s+1}(\mathbb{T}^n)}^2.
     \end{aligned}
 \end{equation*}
% $$
% \|w_N\|_0\leq \|u_N-u\|_0+\|u-I_Nu\|_0\leq CN^{-s}|U|_{H^{s}(\mathbb{T}^n)}\leq CN^{-s}\|U\|_{H^{s}(\mathbb{T}^n)},
% $$
Combining with
Lemma \ref{lemma: Gronwall}, we obtain
$$
\|w_N(t)\|_0^2\leq e^{(\frac{\theta}{2}+1)t}CN^{-2s}\left(\|F^0\|_{H^{s}(\mathbb{T}^n)}^2+\int_{0}^{t}\|U(\xi)\|_{H^{s+1}(\mathbb{T}^n)}^2 d\xi\right),
$$
where we use the assumption \eqref{eqn:regularity}.
According to Theorem \ref{theorem:stability}, we conclude that 
$$
\|U(t)\|_{H^{s+1}(\mathbb{T}^n)}\leq C\|U^0\|_{H^{s+1}(\mathbb{T}^n)}.
$$
Consequently, we derive that
$$
\|w_N(t)\|_0\leq CN^{-s}(\|F^0\|_{H^{s}(\mathbb{T}^n)}^2+\|U^0\|_{H^{s+1}(\mathbb{T}^n)}^2)^{\frac{1}{2}}.
$$

In conclusion, we obtain the error analysis for semi-discrete problem of QPE \eqref{eqn:parabolic}
\begin{align*}
    \|u(t)-u_N(t)\|_0&\leq\|\psi(t)\|_0+\|w_N(t)\|_0\\
    &\leq CN^{-s}(\|F^0\|_{H^{s}(\mathbb{T}^n)}^2+\|U^0\|_{H^{s+1}(\mathbb{T}^n)}^2)^{\frac{1}{2}}\\
    &\leq CN^{-s}\|U^0\|_{H^{s+1}(\mathbb{T}^n)}.
\end{align*}

\end{proof}
% \begin{theorem}\label{theorem:bdf2}
% The  local truncation error of the scheme BDF2 is of order $\mathcal{O}(\tau^3)$.
% \begin{proof}
% Starting from the scheme \eqref{eqn:bdf2}, we  apply a Taylor expansion at $t_{k+1}$ for ${u}^{k}, u^{k-1}$, yielding
% \begin{equation}\label{eqn:taylor1}
%     u^{k}=u^{k+1}-\tau (u^{k+1})^{'}+\frac{\tau^2}{2!}(u^{k+1})^{''}- \frac{\tau^3}{3!}(u^{k+1})^{'''}+\cdots,
% \end{equation}
% \begin{equation}\label{eqn:taylor2}
%     u^{k-1}=u^{k+1}-2\tau (u^{k+1})^{'}+\frac{(2\tau)^2}{2!}(u^{k+1})^{''}- \frac{(2\tau)^3}{3!}(u^{k+1})^{'''}+\cdots.
% \end{equation}
% Multiplying \eqref{eqn:taylor1} by 4 and adding it to \eqref{eqn:taylor2}, we obtain
% $$
%     D^{\tau}u^{k+1}=\frac{3u^{k+1}-4u^{k}+u^{k-1}}{2\tau}=(u^{k+1})^{'}+\frac{2\tau^2}{3!}(u^{k+1})^{'''}+\cdots,
% $$
% which simplifies to
% \begin{equation}
%     D^{\tau}u^{k+1}-(u^{k+1})'=\frac{3u^{k+1}-4u^{k}+u^{k-1}}{2\tau}-(u^{k+1})^{'}=\mathcal{O}(\tau^2).
% \end{equation}
% Based on the above derivation, it can be seen that the BDF2 scheme is accurate to the second order in the time step $\tau$, thus the local truncation error of $\mathcal{O}(\tau^3)$.
% \end{proof}
% \end{theorem}
\subsection{Error estimate for BDF2 scheme}\label{sec:convergence_bdf2}
{In this subsection, we provide an error estimate for the BDF2 scheme in the time direction.
It is well known that  the BDF2 scheme achieves second-order accuracy in the time step $\tau$, resulting in a local truncation error of $\mathcal{O}(\tau^3)$. Building on this, the following lemma establishes the solution accuracy of the BDF2 scheme is of order $\mathcal{O}(\tau^2)$, a result also presented in \cite[Theorem 10.7]{thomee2007galerkin}.}
\begin{lemma}\label{lemma:time_solution_accuracy}
    Let $u^m$ be the solution of the semi-discrete problem at $t_m$ and $u(\cdot, t_m)$ the solution of the problem \eqref{eqn:parabolic}. Then we have the following error estimate
    \begin{equation}\label{eqn:bdf2_error_numerical}
      \|u^m-u(\cdot, t_m)\|_0\leq C_T\tau^2\|U^0\|_{H^{s+1}(\mathbb{T}^n)},
    \end{equation}
    where $C_T$ is a constant that depends on  $T$.
\end{lemma}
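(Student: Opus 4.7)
The plan is to adapt the standard BDF2 error analysis for parabolic equations to the quasiperiodic setting, leveraging the a priori regularity established in Theorem \ref{theorem:stability}. First, I would define the per-step error $e^m := u^m - u(\cdot, t_m)$ and subtract the exact evolution at $t_m$ from the BDF2 update to obtain the error equation
\begin{equation*}
\frac{3 e^m - 4 e^{m-1} + e^{m-2}}{2\tau} + \mathcal{L} e^m = \rho^m,
\end{equation*}
where $\rho^m$ is the local truncation error measuring how well the second-order backward difference of $u$ approximates $u_t(t_m)$. A Taylor expansion centered at $t_m$ yields $\rho^m = \tfrac{\tau^2}{3} u_{ttt}(\eta_m)$ for some $\eta_m \in (t_{m-2}, t_m)$, so $\|\rho^m\|_0 \le C\tau^2 \sup_{0\le t\le T}\|u_{ttt}(t)\|_0$.

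Second, I would control $\sup_t \|u_{ttt}(t)\|_0$ by lifting the analysis to the parent function. Differentiating \eqref{eqn:parabolic2} in $t$ shows that each $\partial_t^k U$ satisfies a parabolic equation of the same type with forcing $\partial_t^k F$, whose $H^s$-norm is controlled through the structural assumption \eqref{eqn:regularity} applied to successive time derivatives. Invoking Theorem \ref{theorem:stability} at the appropriate Sobolev order then yields a uniform bound $\sup_{0\le t\le T}\|u_{ttt}(t)\|_0 \le C\|U^0\|_{H^{s+1}(\bbT^n)}$, giving $\|\rho^m\|_0 \le C\tau^2 \|U^0\|_{H^{s+1}(\bbT^n)}$ for $m \ge 2$.

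Third, I would establish discrete stability of BDF2 through the algebraic G-norm identity
\begin{equation*}
2(3a - 4b + c, a) = \bigl(\|a\|_0^2 + \|2a - b\|_0^2\bigr) - \bigl(\|b\|_0^2 + \|2b - c\|_0^2\bigr) + \|a - 2b + c\|_0^2,
\end{equation*}
applied with $a = e^m$, $b = e^{m-1}$, $c = e^{m-2}$. Testing the error equation against $e^m$, dropping the nonnegative terms $\|a - 2b + c\|_0^2$ and $(\mathcal{L} e^m, e^m) \ge 0$, and bounding $2\tau |(\rho^m, e^m)| \le \tau \|\rho^m\|_0^2 + \tau \|e^m\|_0^2$ produces a telescoping inequality for the composite energy $E^m := \|e^m\|_0^2 + \|2e^m - e^{m-1}\|_0^2$. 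Summing in $m$ and applying the discrete Grönwall inequality yields $\|e^m\|_0^2 \le C_T \tau \sum_j \|\rho^j\|_0^2 \le C_T \tau^2 \|U^0\|_{H^{s+1}(\bbT^n)}^2$.

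Finally, the BDF1 startup step must be treated separately since its local truncation error is only $\mathcal{O}(\tau)$; however, it contributes to $e^1$ through a single application of a first-order accurate scheme, producing $\|e^1\|_0 = \mathcal{O}(\tau^2)$ (the extra factor of $\tau$ comes from multiplying the LTE by the step size), which is consistent with the target rate and feeds cleanly into the telescoping energy starting from $m=2$. The main obstacle I anticipate is the rigorous justification of the uniform $u_{ttt}$ bound: this requires propagating the regularity assumption \eqref{eqn:regularity} to $\partial_t^k F$ and checking that the differentiated initial data lies in the correct Sobolev class, a compatibility step that should be stated explicitly to close the chain from spatial data $U^0$ to temporal smoothness of the solution.
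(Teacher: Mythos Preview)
Your plan is correct and, in fact, considerably more detailed than what the paper actually provides. The paper's proof of this lemma is a two-line sketch: it simply records that the BDF2 local truncation error is $\mathcal{O}(\tau^3)$, cites Thom\'ee's book, and writes the heuristic chain
\[
\|u^m-u(\cdot,t_m)\|_0 \le \sum_{j=0}^{m-1}\|u^j-u(\cdot,t_j)\|_0 \le C_1 m\tau^3 \le C_1 T\tau^2,
\]
without ever spelling out the stability mechanism that makes such an accumulation argument legitimate, and without tracing the constant back to $\|U^0\|_{H^{s+1}(\mathbb{T}^n)}$.

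Your route---deriving the error equation, bounding the consistency error $\rho^m$ through a $u_{ttt}$ estimate obtained from the parent-function regularity (Theorem~\ref{theorem:stability}), and then running the Nevanlinna--Odeh $G$-stability identity to telescope the discrete energy---is the rigorous argument underlying the paper's citation to Thom\'ee. It genuinely adds content: the $G$-norm identity supplies the missing discrete stability, and your explicit lift to the parent system explains why the right-hand side of \eqref{eqn:bdf2_error_numerical} should carry the factor $\|U^0\|_{H^{s+1}(\mathbb{T}^n)}$ at all. The compatibility issue you flag (propagating \eqref{eqn:regularity} to $\partial_t^k F$ and controlling $U_{ttt}(0)$ in terms of $U^0$) is real and is simply not addressed in the paper; your instinct to state it as an explicit hypothesis is the honest way to close the argument.
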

\begin{proof}
Since the local truncation error of BDF2 scheme is of order $\mathcal{O}(\tau^3)$, we can subsequently derive the following result
\begin{equation}
    \|u^m-u(\cdot, t_m)\|_0\leq \sum_{j=0}^{m-1}\|u^j-u(\cdot, t_j)\|_0\leq C_1m\tau^3\leq C_1T\tau^2=C_T\tau^2,
\end{equation}
where $T=M\tau$.
\end{proof}
% \begin{remark}
% The error established in Section \ref{lemma:time_solution_accuracy} requires strong regularity for the initial solution $u_0$ and $U_0$. For smooth initial data, the order of accuracy of the
%  solution is equal to the order of accuracy of the scheme, as also discussed in \cite[Chapter 10]{strikwerda2004finite}.
% \end{remark}

\subsection{Proof of Theorem \ref{theory:error_estimate_parabolic}}\label{sec:convergence_proof}
Using the Theorem \ref{theorem:space} and Lemma \ref{lemma:time_solution_accuracy},   we are now in a position to provide the  error estimate for  fully discrete scheme \eqref{eqn:full_discrete}.
\begin{proof}
 From Theorem \ref{theorem:space}, we have
$$
\|u_N^m-u^m\|_0\leq\sum_{j=0}^{m-1}\|u_N^j-u^j\|_0\leq C_0 m N^{-s}\|U^0\|_{{H^{s+1}(\mathbb{T}^n)}}^2\leq CN^{-s}.
$$
Using Lemma \ref{lemma:time_solution_accuracy} , we deduce that
$$
\|u^m-u(\cdot, t_m)\|_0\leq C_T\tau^2,
$$
where $T=M\tau$. Applying the triangle inequality, we obtain 
$$
\|u_N^m-u(\cdot,t_m)\|_0\leq \|u_N^m-u^m\|_0 + \|u^m-u(\cdot,t_m)\|_0,
$$
which leads to the result
$$
\|u_N^m-u(\cdot, t_m)\|_0\leq C_T(N^{-s}+\tau^2).
$$
\end{proof}

\section{Numerical experiments}\label{sec:numerical_experiments}
In this section, we conduct some numerical experiments to demonstrate the performance of PM-BDF2 for solving QPE \eqref{eqn:parabolic}.  These experiments  are conducted using MATLAB R2023a on a laptop computer equipped with an Intel Core 2.20GHz CPU and 16GB RAM. The computational time, measured in seconds(s), is referred to as CPU time.

We use $L_{QP}^2(\bbR^d)$-norm to measure the numerical error at final time
$$\mathrm{Err}_h^\tau=\|u_N^M-u(\cdot, t_M)\|_{L^2_{QP}(\bbR^d)},$$
where $h=2\pi/N$ is mesh size of $n$-dimensional tours and $\tau$ denotes time step size. The error order in the time direction is calculated by 
$$
\kappa=\frac{\ln \left(\operatorname{Err}_h^{\tau_{1}}/\operatorname{Err}_h^{\tau_{2}}\right)}{\ln \left(\tau_{1} / \tau_{2}\right)}.
$$

 Our numerical examples focus on verifying  the accuracy in space direction and the
 error order in time direction. Consequently, the final time 
$T$ can be chosen arbitrarily.
 
 \subsection{One-dimensional case}
Consider the QPE \eqref{eqn:parabolic} with the  quasiperiodic coefficient $$\alpha(x)=\cos (2 \pi x)+\cos (2 \sqrt{5} \pi x)+6,$$ and the initial condition 
 $$\sum_{\lambda \in \Lambda_{L}} \widetilde{u}_{\lambda} e^{\imath 2\pi \lambda x},\quad \widetilde{u}_{\lambda}=e^{-(|m|+|n|)},$$  where  $$\Lambda_{L}=\{\lambda=m+n \sqrt{5},-16 \leq m, n \leq 15\}.$$
 The exact solution is 
 $$u=\sum_{\lambda \in \Lambda_{L}} e^{-\imath t}\left(\widetilde{u}_{\lambda} e^{\imath2\pi \lambda x}\right),\quad \widetilde{u}_{\lambda}=e^{-(|m|+|n|)},$$  where  $$\Lambda_{L}=\{\lambda=m+n \sqrt{5},-16 \leq m, n \leq 15\}.$$ 

We begin by verifying the spectral convergence of PM. To minimize the impact of temporal discretization on spatial accuracy of PM, we set $ T=10^{-4}, M=10^3, \tau=T/M= 10^{-7}$. The numerical results are as follows. In Table \ref{tab:parabolic:two_modes:combination_pm}, we observe the
spectral accuracy convergence of $\mathrm{Err}_h^\tau$ as $N$ increases sufficiently to cover all non-zero values
of $\widetilde{u}_{\lambda}$, which aligns with our convergence result in Theorem \ref{theory:error_estimate_parabolic}. 

While verifying the second-order error precision of applying BDF2 scheme to solve QPE \eqref{eqn:parabolic} in time direction, we adopt $h=2\pi/16$ to ensure that the approximation error in the spatial direction remains insignificant compared to the error in the temporal direction. The results presented in  Table \ref{tab:parabolic:two_modes:combination_bdf2} demonstrate that the BDF2 scheme maintains a second-order 
accuracy in temporal computations.
 \begin{table}[!hbpt]
 \vspace{-0.2cm}
  \centering
 \footnotesize{
 \caption{When  $\tau=1 \times 10^{-7}$, numerical error and CPU time of PM-BDF2 for different $N$. } \label{tab:parabolic:two_modes:combination_pm}
	\begin{tabular}{|c|c|c|c|c|c|}
\hline 
$N$  & 4 & 8 & 16 & 32 & 64 \\
\hline 
$\mathrm{Err}_h^\tau$ &  6.713e-03  &  1.310e-04  &  2.956e-06  &  3.742e-13  &  3.799e-13  \\
\hline CPU time(s) & 1.125e-02 & 7.831e-02 & 3.251e-01 & 1.299 &  5.302  \\
\hline
\end{tabular}
 }
\end{table}

\vspace{-10pt}
\begin{table}[!hbpt]
\vspace{-0.2cm}
\centering
 \footnotesize{
 \caption{Temporal error and error order of PM-BDF2 with  $N=32$. }\label{tab:parabolic:two_modes:combination_bdf2}
	\begin{tabular}{|c|c|c|c|c|}
\hline 
$\tau$  &  $1 \times 10^{-5}$  &  $5 \times 10^{-6}$  &  $2.5 \times 10^{-6}$  &  $1.25 \times 10^{-6}$  \\
\hline 
$\mathrm{Err}_h^\tau$ &  3.882e-09  &  9.705e-10  &  2.426e-10  &  6.065e-10  \\
\hline
$\kappa$  & - & 2.00 & 2.00 & 2.00 \\
\hline
\end{tabular}
 }
\end{table}

% Table 4: Temporal error of PM-BDF2 with  N=32 .
% \begin{tabular}{|l|c|c|c|c|}
% \hline \tau  &  1 \times 10^{-5}  &  5 \times 10^{-6}  &  2.5 \times 10^{-6}  &  1.25 \times 10^{-6}  \\
% \hline Err &  3.882 \mathrm{e}-09  &  9.705 \mathrm{e}-10  &  2.426 \mathrm{e}-10  &  6.065 \mathrm{e}-11  \\
% \hline \kappa  & - & 2.00 & 2.00 & 2.00 \\
% \hline
% \end{tabular}
\subsection{Two-dimensional cases}
In this subsection, we further demonstrate that the PM is a highly precise and efficient
algorithm to solve QPE through two-dimensional examples. Consider the QPE \eqref{eqn:parabolic} with coefficient 
$$
\alpha(x,y)=\cos (2 \pi x)+\cos (2 \sqrt{5} \pi x)+\cos (2 \pi y)+12,\quad (x,y)\in\bbR^2,
$$
and the  initial value
$$
u^0(x, y)=e^{\imath 2 \pi x}+e^{\imath 2 \sqrt{5} \pi x}+e^{\imath 2 \pi y}.
$$
The corresponding projection matrix is 

$$
\bm{P}=2\pi\begin{bmatrix}
    	1 & \hspace{2mm}\sqrt{5} &\hspace{2mm}0\\
    0 &\hspace{2mm}0&\hspace{2mm}1\\    
\end{bmatrix}.
$$
Therefore, this quasiperiodic system can be embedded into a three-dimensional parent
system.
The exact solution on $\bbR^2$ is
$$
u=e^{-\imath t}\left(e^{\imath 2 \pi x}+e^{\imath 2 \sqrt{5} \pi x}+e^{\imath 2 \pi y}\right).
$$

% Section \ref{tab:parabolic:two_modes:sperate_pm_2d} shows the numerical error $\mathrm{Err}_h^\tau$ and CPU time of PM-BDF2 under $T=10^{-5}, M=10^2, \tau=T/M= 10^{-7}$.  Once the number of discrete points 
% $N$ is sufficient to capture all non-zero Fourier coefficients, the PM can achieve very high precision. As we further refine the grid to eliminate errors in the time direction, we observe that the PM can attain machine-level precision.  When $N=8$, the
%  numerical error in space direction is not affected by the error in time direction.  Section \ref{tab:parabolic:two_modes:sperate_bdf2_2d}
%  verifies
%  that the PM-BDF2 has second-order error accuracy in time direction with $N=8$.
In Table \ref{fig:parabolic:three_modes}, we examine the dependence of errors on both  $N$ and $\tau$. Specifically, Section \ref{fig:paraboic:three_modes:space} presents numerical error $\mathrm{Err}_{h}^\tau$ for $T=0.01$ with time steps set at $\tau=10^{-4}, 10^{-6}$ and $10^{-12}$.  Regardless of the choice of $N$, it is confirmed in Figure \ref{fig:paraboic:three_modes:space} that the errors with $\tau=10^{-12}$ is more smaller than those with $\tau=10^{-4}$ or $\tau=10^{-6}$. This means  the error depends only on $\tau$ and not on $N$. Furthermore, when $\tau$ is sufficiently small, such as  $\tau=10^{-12}$, the error can reach machine error, demonstrating that the  high accuracy of PM in solving QPE. 

{For the BDF2 scheme, as shown in Figure \ref{fig:parabolic:three_modes:time}, the temporal  convergence rate of PM-BDF2 is of order $\mathcal{O}(\tau^2)$, as reflected by the slope of yellow dashed line.} Moreover, we conclude that if $N$ is sufficient large (with $N\geq 8$ being adequate in this case), the error depends only on the size of $\tau$ and not on the size of $N$. This is primarily due to the use of the PM in the spatial direction.

 \begin{figure}[!hbpt]	
       \hspace{12mm}
	\subfigure[] 
	{
		\begin{minipage}{7cm}
			\centering         
			\includegraphics[width=6cm]{./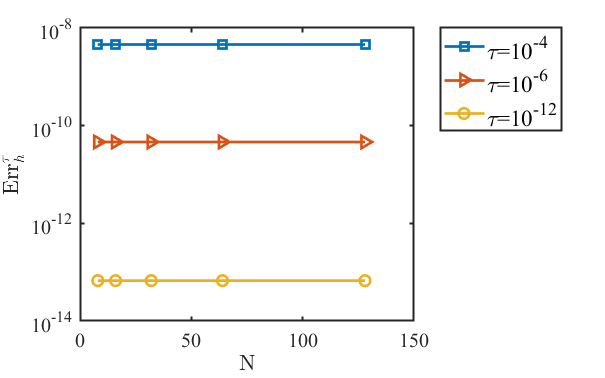}   
            \label{fig:paraboic:three_modes:space}
		\end{minipage}
 	}
        \hspace{-9mm}
	\subfigure[]
	{
		\begin{minipage}{7cm}
			\centering      
			\includegraphics[width=6cm]{./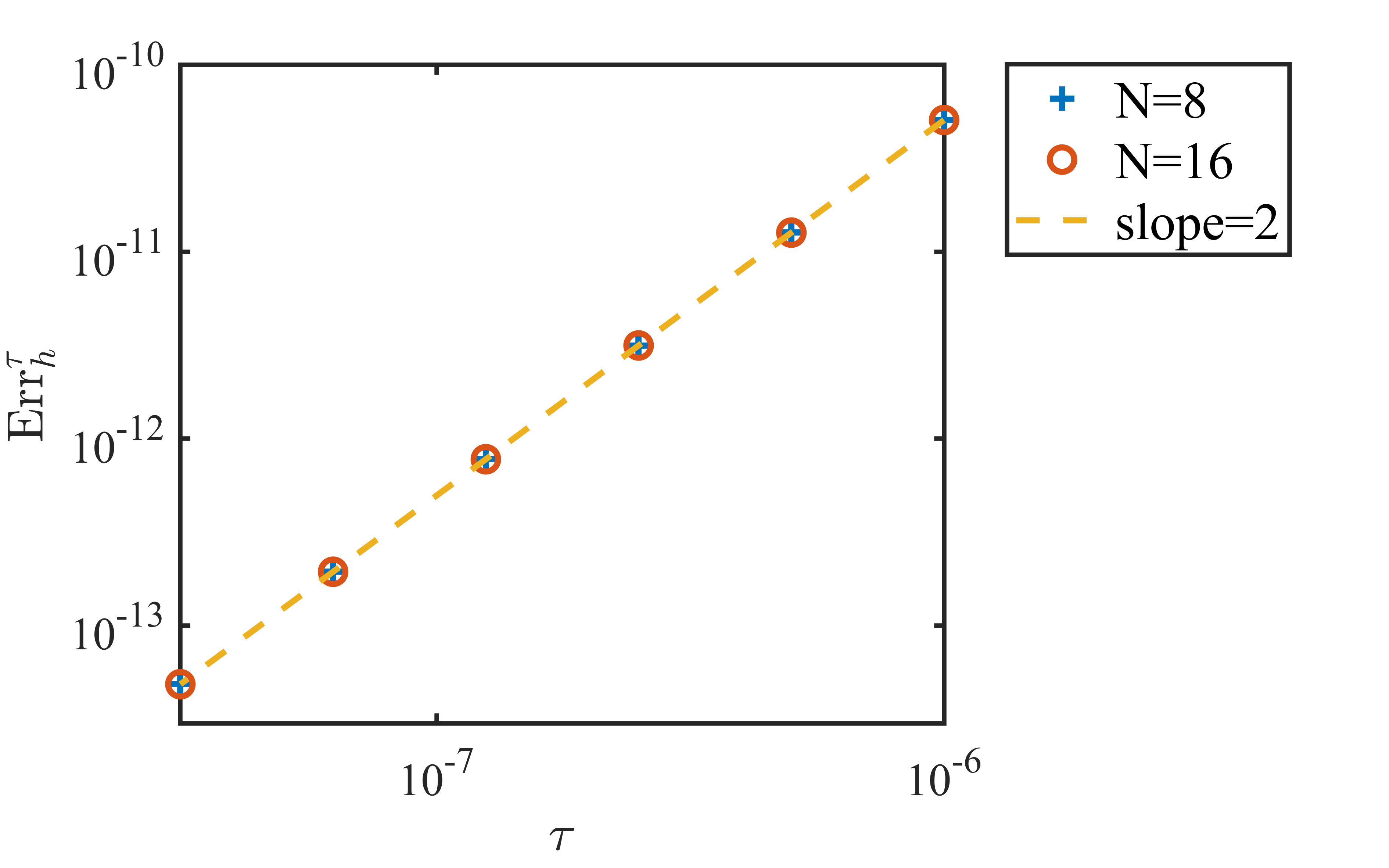}   
            \label{fig:parabolic:three_modes:time}
		\end{minipage}
	}
	\caption{Figure \ref{fig:paraboic:three_modes:space}: The relationship between the error and the spatial  parameter $N$ for solution at $T=0.01$; Figure \ref{fig:parabolic:three_modes:time}: The relationship between the error and the time step $\tau$ for solution at $T=0.01$. } %  %大图名称
	\label{fig:parabolic:three_modes}  
\end{figure}
%  \begin{table}[!hbpt]
% 	\centering
%  \footnotesize{
% 	\begin{tabular}{|c|c|c|c|}
% \hline 
% $N$  & 4 & 8 & 16  \\
% \hline 
% $\mathrm{Err}_h^\tau$ &  7.360-02  &  6.441e-13  &  6.441e-13  \\
% \hline CPU time(s) & 6.322e-01 & 15.123 & 4.127e+02   \\
% \hline
% \end{tabular}
%  }
%     \caption{When  $\tau=1 \times 10^{-7}$, numerical error and CPU time of PM-BDF2 for different $N$. }\label{tab:parabolic:two_modes:sperate_pm_2d}
% \end{table}
% \begin{table}[!hbpt]
% 	\centering
%  \footnotesize{
% 	\begin{tabular}{|c|c|c|c|c|}
% \hline 
% $\tau$  &  $1 \times 10^{-6}$  &  $5 \times 10^{-7}$  &  $2.5 \times 10^{-7}$  &  $1.25 \times 10^{-4}$  \\
% \hline 
% $\mathrm{Err}_h^\tau$ &  5.076e-11  &  1.268e-11  &  3.155e-12  &  7.765e-13  \\
% \hline
% $\kappa$  & - & 2.00 & 2.00 & 2.00 \\
% \hline
% \end{tabular}
%  }
%     \caption{Temporal error of PM-BDF2 with  N=8. }\label{tab:parabolic:two_modes:sperate_bdf2_2d}
% \end{table}

Next, we design a more complex test to demonstrate PM-BDF2 has spectral accuracy in space. We choose 
$$
\alpha(x,y)=\cos x+\cos (\sqrt{5} x)+\cos y+12,\quad (x,y)\in\bbR^2,
$$
and give the exact solution in form
$$
u(x,y)=e^{-\imath  t}\left(\sum_{\lambda \in \Lambda_{L}} \widetilde{u}_{\lambda} e^{\imath \lambda x} + e^{\imath y}\right),\quad \widetilde{u}_{\lambda}=e^{-(|m|+|n|)},
$$
where
$$\Lambda_{L}=\{\lambda=m+n \sqrt{5},-16 \leq m, n \leq 15\}.$$ The initial condition is 
$$
u^0=\sum_{\lambda \in \Lambda_{L}} e^{-\imath t}\left(\widetilde{u}_{\lambda} e^{\imath \lambda x}\right) + e^{\imath y}
$$
and the corresponding projection matrix is 
$$
\bm{P}=
\begin{bmatrix}
	1 &\hspace{2mm} \sqrt{5} &\hspace{2mm} 0\\
    0 &\hspace{2mm} 0&\hspace{2mm} 1\\
\end{bmatrix}.
$$

We  select parameters $T=10^{-5}$, $M=10^2$, and $\tau=T/M=10^{-7}$. In  Table \ref{tab:parabolic:two_modes:combination_pm_2d}, we present the numerical error $\mathrm{Err}_h^\tau$ along with CPU time of PM-BDF2. Visually, PM exhibits spectral convergence in spatial direction. When the fine mesh size is set to $h=16/\pi$, the spatial  error becomes negligible in comparison to the temporal error. As shown in Table \ref{tab:parabolic:two_modes:combination_bdf2_2d}, this allows the PM-BDF2 method to achieve second-order accuracy in time with $N=32$.
\begin{table}[!hbpt]
\vspace{-0.2cm}
\centering
 \footnotesize{
 \caption{When  $\tau=1 \times 10^{-7}$, numerical error and CPU time of PM-BDF2 for different $N$. }\label{tab:parabolic:two_modes:combination_pm_2d}
	\begin{tabular}{|c|c|c|c|c|}
\hline 
$N$  & 4 & 8 & 16 &32  \\
\hline 
$\mathrm{Err}_h^\tau$ &  1.210-03  &  2.144e-04  &  3.183e-07&4.643e-13  \\
\hline CPU time(s) & 3.607e-01 & 2.802 & 1.214e+01&9.681e+01   \\
\hline
\end{tabular}
 }
\end{table}

\begin{table}[!hbpt]
\vspace{-0.2cm}
\centering
 \footnotesize{
  \caption{Temporal error Temporal error and error order of PM-BDF2 with  $N=32$. }\label{tab:parabolic:two_modes:combination_bdf2_2d}
	\begin{tabular}{|c|c|c|c|c|}
\hline 
$\tau$  &  $1 \times 10^{-6}$  &  $5 \times 10^{-7}$  &  $2.5 \times 10^{-7}$  &  $1.25 \times 10^{-7}$  \\
\hline 
$\mathrm{Err}_h^\tau$ &  4.886e-11  &  1.221e-11  &  3.045e-12  &  7.516e-13  \\
\hline
$\kappa$  & - & 2.00 & 2.00 & 2.00 \\
\hline
\end{tabular}
 }
\end{table}

% Next we provide an example without closed-form solution to  validate the correctness of our algorithm.  Consider the coefficient 
% \begin{equation}
%     \alpha(\bm{x})=\sum_{j=1}^5e^{\imath\bm{Pk}_j\cdot\bm{x}}-e^{\imath\bm{Pk}_6\cdot\bm{x}}+20,\quad \bm{x}\in\bbR^2,
% \end{equation}
% where 
% $$
% (\bm{k}_1,\bm{k}_2,\bm{k}_3,\bm{k}_4,\bm{k}_5,\bm{k}_6)=\left(
% \begin{array}{cccccc}
% 	0 & 0 & 1 & 0 & 0 & 0\\
%     1 & -1& 0 & 0 & 0 & 0\\
%     0 & 0 & 0 & 0 & 1 & -1\\
%     0 & 0 & 0 & 1 & 0 & 0
% \end{array}
% \right)
% $$
% and corresponding projection matrix
% $$
% \bm{P}=\left(
% \begin{array}{cccc}
% 	1 & \cos(\pi/4) & 0 &-\cos(\pi/4)\\
% 	0 & \sin(\pi/4) & 0 &-\sin(\pi/4)
% \end{array}
% \right).
% $$
% The initial value is given by
% $$
% u^0(\bm{x})=\sum_{\bm{\lambda} \in \Lambda_{L}} \widetilde{u}_{\bm{\lambda}} e^{\imath \bm{\lambda}\cdot \bm{x}},\quad \widetilde{u}_{\bm{\lambda}}=e^{-(|k_1|+|k_2|+|k_3|+|k_4|)},
% $$
%  where  $$\Lambda_{L}=\{\bm{\lambda}=\bm{Pk}: k_j\in\bbZ,~ -8 \leq k_j \leq 7,~ j=1, \cdots,4\}.$$

\section{Conclusions}\label{sec:conclusion}
In this paper, we develop a high-accuracy numerical method called PM-BDF2 for solving arbitrary-dimensional QPE \eqref{eqn:parabolic}. A rigorous convergence analysis shows that PM-BDF2 achieves spectral accuracy in space, provided the exact solution has sufficient regularity, and second-order accuracy in time. Finally, our theoretical findings are supported by one- and two-dimensional numerical experiments.

\end{document}